\numberwithin{equation}{section}
\numberwithin{table}{section}
\definecolor{dkgreen}{rgb}{0,0.6,0}
\definecolor{gray}{rgb}{0.5,0.5,0.5}
\definecolor{mauve}{rgb}{0.58,0,0.82}
\theoremstyle{plain}
\newtheorem{thm}{Theorem} 
\theoremstyle{plain}
\newtheorem{lem}{Lemma}[section]
\newtheorem{cor}[lem]{Corollary}
\newtheorem{prop}[lem]{Proposition}
\newtheorem*{remarks*}{Remarks}
\newtheorem{rem}[lem]{Remark}
\newtheorem*{rem*}{Remark}
\theoremstyle{definition}
\newtheorem{defn}{Definition}
\newtheorem{conj}[defn]{Conjecture}
\newtheorem{qstn}[defn]{Question}
\newcommand{\magma}{{\sc magma }}
\newcommand{\QQ}{\mathbb{Q}}
\newcommand{\RR}{\mathbb{Q}}
\newcommand{\PP}{\mathbb{P}}
\renewcommand{\AA}{\mathbb{A}}
\newcommand{\PGL}{\textrm{PGL}}
\newcommand{\PrePer}{\textrm{PrePer}}
\newcommand{\Per}{\textrm{Per}}
\newcommand{\Ind}{\textrm{Ind}}
\title{Quadratic maps with a periodic critical point of period~2}
\author{Jung Kyu Canci}
\email{jungkyu.canci@unibas.ch}
\author{Solomon Vishkautsan} 
\thanks{The second author is supported by the ERC-Grant ``Diophantine Problems," No.\ 267273.}
\email{wishcow@gmail.com \textrm{or} solomon.vishkautsan@sns.it}
\let\thetitle\@title
\let\theauthor\@author
\begin{document}
\maketitle

\begin{abstract}
We provide a complete classification of possible graphs of rational preperiodic points of endomorphisms of the projective line of degree 2 defined over the rationals with a rational periodic critical point of period 2, under the assumption that these maps have no periodic points of period at least 7. We explain how this extends results of Poonen on quadratic polynomials. We show that there are exactly 13 possible graphs, and that such maps have at most 9 rational preperiodic points. We provide data related to the analogous classification of graphs of endomorphisms of degree 2 with a rational periodic critical point of period 3 or 4.
\end{abstract}

\section{Introduction}
Let $\phi:\mathbb{P}^1\rightarrow\mathbb{P}^1$ be an endomorphism defined over a field $K$. A point $P\in\mathbb{P}^1$ is \emph{periodic} for $\phi$ if $\phi^n(P)=P$ for some $n\geq{1}$ and the minimal such $n$ is called the \emph{period} of $P$. A point $P\in\mathbb{P}^1$ is called \emph{preperiodic} if some iterate of $P$ is periodic, i.e.\ there exists an $m\geq{0}$ such that $\phi^m(P)$ is periodic. We denote by $\PrePer(\phi, K)$ the set of preperiodic points for $\phi$ in $\mathbb{P}^1(K)$ (similarly $\Per(\phi,K)$ is the set of $K$-periodic points and $\Per_n(\phi,K)$ is the set of $K$-periodic points of period $n$). By a classical theorem of Northcott~\cite{article:northcott1950}, the set $\PrePer(\phi, K)$ is finite for a number field $K$; it can therefore be given a finite directed graph structure, the \emph{preperiodicity graph} of $\phi$, denoted by $G_\phi$, by drawing an arrow from $P$ to $\phi(P)$ for each $P\in \PrePer(\phi, K)$. It is a natural question to ask which types of graphs (up to graph isomorphism) can be obtained from such maps. It is not known whether the list of possible graphs is finite; this is equivalent to the ($1$-dimensional) Uniform Boundedness Conjecture of Morton and Silverman~\cite{article:morton-silverman1994} that says $\#\PrePer(\phi, K)$ is bounded by a bound depending only on the degrees of $K/\mathbb{Q}$ and $\phi$.

The simplest case of the graph classification question is for quadratic polynomials defined over $\mathbb{Q}$. Flynn, Poonen and Schaefer~\cite{article:flynn-et-al1997} conjectured that for any integer $N>3$ there is no quadratic polynomial with coefficients in $\mathbb{Q}$ with a $\mathbb{Q}$-periodic point of period $N$. Assuming this conjecture is true, Poonen~\cite{article:poonen1998} provided a complete classification of $12$ possible preperiodicity graphs for quadratic polynomials defined over $\mathbb{Q}$. Another consequence of Poonen's classification is that the number of $\mathbb{Q}$-preperiodic points of a quadratic polynomial is at most $9$. The Flynn, Poonen and Schaefer conjecture was proved for the special cases of a periodic point of period $N=4$ (Morton~\cite{article:morton1998}), $N=5$ (Flynn, Poonen and Schaefer~\cite{article:flynn-et-al1997}) and $N=6$ (Stoll~\cite{article:stoll2008}, depending on the Birch and Swinnerton-Dyer conjecture); experimental results by Hutz and Ingram~\cite{article:hutz-ingram2013} and Benedetto et al.~\cite{article:benedetto-et-al2014} provide further evidence for it.

In this paper we study a natural generalization of quadratic polynomials, which are endomorphisms of $\mathbb{P}^1$ of degree $2$ (henceforth called a \emph{quadratic map}) having a periodic \emph{critical} point (i.e., a periodic point that is a ramification point of the endomorphism). This is indeed a generalization of a quadratic polynomial, since if the periodic critical point is fixed (i.e., of period $1$) then the map is exactly a quadratic polynomial (up to a conjugation by a projective automorphism of $\mathbb{P}^1$). 

For a quadratic map defined over $\mathbb{Q}$ with a $\mathbb{Q}$-rational periodic critical point of period $2$, we have a complete classification of possible preperiodicity graphs, assuming a conjecture similar to that of Flynn, Poonen and Schaefer~\cite{article:flynn-et-al1997}.

\begin{conj} \label{conj:main2}
Let $\phi$ be a quadratic map defined over $\mathbb{Q}$ with a $\mathbb{Q}$-rational periodic critical point of period $2$, then $\phi$ has no $\mathbb{Q}$-periodic point of period greater or equal to $3$.
\end{conj}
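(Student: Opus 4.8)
The statement is an exact analogue of the Flynn--Poonen--Schaefer conjecture for quadratic polynomials, and like that conjecture it is almost certainly out of reach for all $N$ simultaneously; the realistic plan is to reduce the period-$N$ condition, for each fixed $N\ge 3$, to the determination of the rational points on an explicit curve, and then to carry this out for the small values of $N$ that actually control the classification.

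First I would fix a normal form. Since the critical $2$-cycle is $\mathbb{Q}$-rational, I can conjugate by an element of $\mathrm{PGL}_2(\mathbb{Q})$ to move it to $\{0,\infty\}$ with $0\mapsto\infty\mapsto 0$ and with $0$ the critical point. Writing $\phi=(a_2z^2+a_1z+a_0)/(b_2z^2+b_1z+b_0)$, the conditions $\phi(\infty)=0$, $\phi(0)=\infty$, and that $0$ be a \emph{double} preimage of $\infty$ force $a_2=b_0=b_1=0$, hence $\phi(z)=(\alpha z+\beta)/z^{2}$ with $\alpha,\beta\in\mathbb{Q}$ and $\beta\neq 0$. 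The residual conjugations $z\mapsto\lambda z$ act by $(\alpha,\beta)\mapsto(\alpha/\lambda^{2},\beta/\lambda^{3})$, so the $\overline{\mathbb{Q}}$-conjugacy class is recorded by the single modulus $c=\alpha^{3}/\beta^{2}$ and I obtain a one-parameter family $\phi_{c}$ (the value $c=0$ being the bicritical map $z\mapsto\beta/z^{2}$, whose second critical point also lies in the $2$-cycle). A given $c$ may carry several $\mathbb{Q}$-twists, which I keep track of by working throughout with the explicit model $\phi_{\alpha,\beta}(z)=(\alpha z+\beta)/z^{2}$ rather than only with $c$.

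Next, for each $N\ge 3$ I would form the $N$-th dynatomic polynomial $\Phi_{N}^{*}(\alpha,\beta,z)$ of $\phi_{\alpha,\beta}$ and study the dynatomic modular curve $C_{N}$ obtained by intersecting its zero locus with the scaling symmetry. A (non-degenerate) $\mathbb{Q}$-point of $C_{N}$ is exactly the data of a member of the family carrying a rational point of exact period $N$, so it suffices to prove that every $\mathbb{Q}$-point of $C_{N}$ is one of the finitely many degenerate ones: cusps where the degree of $\phi$ drops, or collisions of the period-$N$ point with the critical cycle or with a point of period properly dividing $N$. The analysis then proceeds by the genus of $C_{N}$. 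The low values of $N$ should give curves of small genus: genus-$0$ components are rationally parametrized and inspected directly, while genus-$1$ components are elliptic curves whose Mordell--Weil groups I would compute (as in Stoll's treatment of period $6$, possibly conditionally on the Birch and Swinnerton-Dyer conjecture) to rule out non-degenerate rational points. This is precisely the route by which Morton, Flynn--Poonen--Schaefer and Stoll disposed of periods $4$, $5$ and $6$ for quadratic polynomials.

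The main obstacle is the passage to large $N$. As $N$ grows the genus of $C_{N}$ grows rapidly, and Faltings' theorem then yields only the finiteness of $C_{N}(\mathbb{Q})$, not its emptiness; extracting the complete set of rational points would require a Chabauty--Coleman computation on $C_{N}$ or on a well-chosen quotient, contingent on the relevant Jacobian having Mordell--Weil rank smaller than its dimension, a condition one cannot presently verify uniformly in $N$. This is exactly why the statement must be recorded as a conjecture rather than a theorem. The accessible and genuinely useful target is therefore to settle $N=3,4,5,6$ (unconditionally where possible, otherwise conditionally on BSD), which already reduces Conjecture~\ref{conj:main2} to the weaker hypothesis, sufficient for the classification, that there are no $\mathbb{Q}$-periodic points of period at least $7$.
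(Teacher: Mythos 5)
You rightly treat this statement as a conjecture that cannot currently be proved in full, and your program for the accessible periods is essentially the paper's own evidence (Theorem~\ref{thm:main-thm2}): the same normal form $\phi(z)=(a_1z+1)/(b_2z^2)$ with the critical $2$-cycle at $\{0,\infty\}$, the dynatomic polynomials $\Phi_N^*$, and the determination of all rational points on the resulting curves for $N=3,4,5,6$, including passing to a well-chosen quotient (the paper uses Morton's trace-map quotient by the cyclic automorphism $(b_2,z)\mapsto(b_2,\phi(z))$) and a Chabauty plus Mordell--Weil sieve computation for the genus-$2$ quotient at $N=6$. The only notable difference is that none of the paper's computations end up conditional on BSD: the curves for $N=3,4,5$ reduce to conics with no rational points (or reducible over $\mathbb{Q}(\sqrt{-3})$), and $N=6$ is settled unconditionally, in contrast to Stoll's polynomial period-$6$ result that you cite as the model.
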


\begin{thm} \label{thm:main-thm1}
Assuming Conjecture \ref{conj:main2}, there are exactly $13$ possible preperiodicity graphs for quadratic maps defined over $\mathbb{Q}$ with a $\mathbb{Q}$-rational periodic critical point of period $2$; these are listed in Tables \ref{table:r2-pcf} and \ref{table:r2}. Moreover, the number of preperiodic points of such maps is at most $9$ (as in the quadratic polynomial case).
\end{thm}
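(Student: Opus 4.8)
The plan is to follow Poonen's strategy for quadratic polynomials: first reduce to an explicit normal form, then enumerate all combinatorially possible preperiodicity graphs compatible with Conjecture~\ref{conj:main2}, and finally decide by a Diophantine analysis which of these graphs are actually realized over $\mathbb{Q}$. First I would fix the normal form. Using a $\mathrm{PGL}_2(\mathbb{Q})$-conjugation I can move the rational critical $2$-cycle to $\{0,\infty\}$ with $0$ the critical point, which forces $\phi$ into the shape $\phi(z)=(bz+c)/(dz^2)$; rescaling the coordinate by the residual automorphisms fixing $\{0,\infty\}$ reduces this to a one-parameter family $\phi_t$. I would record the two structural facts that drive everything: the point $\infty$ has the single (double) preimage $0$, so no strictly preperiodic point ever maps to $\infty$; and $0$ has, besides its periodic preimage $\infty$, exactly one further rational preimage, producing an automatic tail point. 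I would also locate the second critical point, since whether it is preperiodic (the post-critically finite case) is what distinguishes Table~\ref{table:r2-pcf} from Table~\ref{table:r2}.

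The periodic part of the graph is then almost rigid. A degree-$2$ map has exactly three fixed points and exactly one $2$-cycle (two points of exact period $2$), counted with multiplicity over $\overline{\mathbb{Q}}$; in our normalization that unique $2$-cycle is $\{0,\infty\}$, which is automatically rational. Hence, granting Conjecture~\ref{conj:main2} to exclude all periods $\geq 3$, the rational periodic points consist only of $\{0,\infty\}$ together with those of the three fixed points that happen to be rational. The number of rational fixed points (zero through three) is governed by the factorization over $\mathbb{Q}$ of an explicit cubic in the parameter $t$, which I would analyze directly.

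Next I would build the preperiodic trees hanging off each rational periodic point by iterating preimages backwards. In degree $2$ each point has at most two preimages, so the in-degree is at most $2$ and each backward step amounts to deciding whether a certain element of $\mathbb{Q}(t)$ (a discriminant-type expression) is a square in $\mathbb{Q}$, that is, whether a parametrized conic or quartic has a rational point. Northcott's theorem bounds the depth of these trees, and I would make this bound explicit by a height/square-descent argument, showing that only finitely many tree shapes are combinatorially possible. Assembling the rigid periodic core with the admissible trees produces a finite list of candidate graphs.

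The final and hardest step is to decide which candidates occur. Each candidate graph $G$ prescribes the simultaneous existence of a specified set of rational preperiodic points, which cuts out a curve $X_G$ over the $t$-line (a fiber product of the square-conditions above); the graph $G$ is realized precisely when $X_G$ has a rational point lying outside the degenerate locus where marked points collide, where $\phi$ drops degree, or where the critical point fails to have exact period $2$. For the genus-$0$ curves this is a direct parametrization; the obstacle is the curves of genus $\geq 1$, where I would determine $X_G(\mathbb{Q})$ via local obstructions or, in the elliptic and hyperelliptic cases, by Mordell--Weil rank and Chabauty computations (the role of the \magma routines), and then verify that the surviving rational points are non-degenerate. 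This yields exactly the $13$ graphs of Tables~\ref{table:r2-pcf} and~\ref{table:r2}, and inspecting their vertex counts gives the bound of $9$ rational preperiodic points.
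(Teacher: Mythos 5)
Your outer structure matches the paper's: the normal form $\phi(z)=(a_1z+1)/(b_2z^2)$ with critical $2$-cycle $\{0,\infty\}$ (Lemma~\ref{lem:cyc2}), the rigid periodic core consisting of $\{0,\infty\}$ plus at most three rational fixed points (Lemma~\ref{lem:periodic-limits} together with Conjecture~\ref{conj:main2}), square-conditions for adding rational preimages, and curves $X_G$ whose rational points (outside a degenerate locus) decide realizability, settled by rank-$0$ elliptic curves and pointless conics. However, there is a genuine gap in your middle step, where you pass from ``preimage trees'' to ``a finite list of candidate graphs.'' You invoke Northcott's theorem plus an unspecified ``height/square-descent argument'' to bound the depth of the trees uniformly over the family. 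Northcott's theorem bounds the preperiodic set of each \emph{individual} map, with a bound depending on the height of that map's coefficients; it gives nothing uniform in the parameter. A uniform bound over the whole one-parameter family is precisely the (open) Uniform Boundedness Conjecture of Morton--Silverman restricted to this family --- if such a height argument existed, the conjecture of Benedetto et al.\ that $\#\PrePer(\phi,\mathbb{Q})\leq 14$ would not be a conjecture. So as written, your enumeration step assumes essentially the hardest part of the problem, and the claim ``only finitely many tree shapes are combinatorially possible'' is unjustified before the Diophantine analysis is done.

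The paper avoids this by reversing the logical order: finiteness is a \emph{consequence} of the Diophantine analysis, not a precondition. It defines a recursively generated, a priori \emph{infinite}, set $\mathcal{G}$ of possible graphs (Lemma~\ref{lem:recursive}: every preperiodicity graph in this family is built from the base graph by repeatedly adding a cycle-with-preimages, or adding the two preimages of an in-degree-$0$ vertex), and then organizes the seven non-PCF realizable graphs of Table~\ref{table:r2} together with the five graphs N2E1--N2E5 of Table~\ref{table:n2e} into a Hasse diagram that is \emph{complete} under these recursion steps: any single recursion step applied to a graph in the diagram either stays in the diagram or yields a graph containing one of N2E1--N2E5 (Conjecture~\ref{conj:main2} and $\deg\Phi_1^*=3$ ensure step 1 can only add at most three fixed points). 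Since a graph containing an inadmissible subgraph is itself inadmissible, proving inadmissibility of just those five minimal graphs (Proposition~\ref{prop:inadmissible}, via the rank-$0$ curves 11a3, 19a3, 15a8 and a conic with no rational points) caps the infinite poset: any graph outside the list of $13$ would have to contain one of them, a contradiction (Proposition~\ref{prop:hasse}). To repair your proof, replace the Northcott/height step by this ``minimal forbidden subgraph'' argument; your final curve-theoretic step then needs to be carried out only for those five explicit graphs. (Two smaller points: the PCF side of the classification, Table~\ref{table:r2-pcf}, is quoted from Lukas--Manes--Yap rather than re-derived; and Chabauty is not needed for this theorem --- it enters only in the period-$6$ part of Theorem~\ref{thm:main-thm2}.)
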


We provide evidence for Conjecture~\ref{conj:main2} in the next theorem.

\begin{thm} \label{thm:main-thm2}
Let $\phi$ be a quadratic map defined over $\mathbb{Q}$ with a $\mathbb{Q}$-rational periodic critical point of period $2$, then $\phi$ has no $\mathbb{Q}$-periodic point of period $3,4,5$ or $6$. 
\end{thm}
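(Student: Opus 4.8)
The plan is to reduce to an explicit one-parameter normal form and then translate the existence of a rational periodic point of each period $N\in\{3,4,5,6\}$ into the existence of a rational point on a single plane (dynatomic) curve, whose rational points we then determine.

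First I would exploit the critical period-$2$ orbit to fix coordinates. Conjugating by a suitable element of $\PGL_2(\mathbb{Q})$, I may assume the orbit is $0\mapsto\infty\mapsto 0$ with $0$ the ramification point. Writing $\phi=N(z)/D(z)$ with $\deg\phi=2$, the condition $\phi(\infty)=0$ forces $\deg D=2$ and $\deg N\le 1$, the condition $\phi(0)=\infty$ forces $D(0)=0$, and the requirement that $0$ be critical forces the second root of $D$ to coincide with $0$; hence $\phi(z)=(\gamma z+\delta)/z^2$ with $\gamma,\delta\in\mathbb{Q}$ and $\delta\neq 0$. The only remaining coordinate freedom is the scaling $z\mapsto\lambda z$ fixing $0,\infty$, under which $(\gamma,\delta)\mapsto(\gamma/\lambda^2,\delta/\lambda^3)$. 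For $\gamma\neq 0$ the substitution $z\mapsto(\delta/\gamma)z$ is rational and conjugates $\phi$ to
\[
\psi_t(z)=\frac{t(z+1)}{z^2},\qquad t=\frac{\gamma^3}{\delta^2}\in\mathbb{Q}^{\ast},
\]
so that $\gamma^3/\delta^2$ is a complete $\mathbb{Q}$-conjugacy invariant and no quadratic-twist ambiguity intervenes. The degenerate case $\gamma=0$ (maps conjugate to $z\mapsto s/z^2$) I would treat separately, as it forms a single subfamily whose periodic structure is elementary. Thus it suffices to show that for every $t\in\mathbb{Q}^{\ast}$ the map $\psi_t$ has no rational point of exact period $3,4,5$ or $6$.

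Second, for each such $N$ I would form the $N$-th dynatomic polynomial $\Phi_N(t,z)$ of $\psi_t$ (obtained from $\psi_t^{\,n}(z)=z$ by the usual M\"obius inversion removing proper divisors of $N$) and clear denominators to obtain a plane curve $C_N\colon \Phi_N(t,z)=0$ over $\mathbb{Q}$ in the $(t,z)$-plane. A rational point of exact period $N$ for some $\psi_t$ is precisely a rational point of $C_N$ lying off the \emph{degenerate locus}: values of $t$ where $\psi_t$ drops degree, points where the true period is a proper divisor of $N$, and the orbit points at infinity. The number of period-$N$ points grows like $2^N$ with the standard M\"obius correction ($6,12,30,54$ for $N=3,4,5,6$), so $C_N$ has large degree in $z$ and must be analyzed component by component: I would factor $\Phi_N$ over $\mathbb{Q}$ and over small extensions, compute the genus of each geometric component, and discard those parametrizing only degenerate data.

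Finally I would determine the rational points on each surviving component and check that none yields a genuine period-$N$ point. For $N=3$ and $N=4$ I expect the components to be of genus $0$ or $1$, handled by an explicit parametrization and direct inspection, or by a rank-$0$ Mordell--Weil computation showing the finitely many rational points are all degenerate. For $N=5$ and especially $N=6$ the components have genus at least $2$, so Faltings guarantees finiteness but an \emph{effective} determination is required; here I would map the relevant components to elliptic or genus-$2$ curves and apply descent together with Chabauty--Coleman (or elliptic Chabauty over a quadratic field), exhibiting the full rational point set and verifying each point is degenerate. I expect $N=6$ to be the main obstacle: it is the analogue of Stoll's period-$6$ computation for quadratic polynomials, and making it unconditional hinges on the Jacobians of the relevant genus-$2$ components having Mordell--Weil rank strictly below their genus (so that Chabauty applies) or rank $0$; establishing this for the specific curves arising from $\psi_t$ is where the real difficulty lies.
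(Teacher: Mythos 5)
Your framework is the same as the paper's: your $\psi_t$ is essentially the one-parameter normal form of Lemma~\ref{lem:cyc2-unique} (conjugate by $z\mapsto -z$ and set $b_2=-1/t$), and for $N=3,4$ the paper works with the birationally equivalent curve obtained from the two-parameter form of Lemma~\ref{lem:cyc2} by demanding that the marked point $z=1$ be a root of $\Phi_N^*$ (in your coordinates, $(a_1,b_2)\mapsto(t,z)=(a_1^3/b_2,\,a_1)$ is the birational map). For these two periods your expectations are borne out: the period-$3$ curve splits over $\mathbb{Q}(\sqrt{-3})$ into two conjugate conics whose only rational points (their intersection and a point at infinity) are degenerate, and the period-$4$ curve is birational to the pointless conic $X^2+Y^2+Z^2=0$; everything is genus $0$ and elementary, as you predicted.

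The genuine gap is at $N=5$ and $N=6$. Your plan there is to ``factor $\Phi_N$'' and ``map the relevant components to elliptic or genus-$2$ curves,'' but the dynatomic curves are \emph{absolutely irreducible}, of genus $4$ for $N=5$ and genus $15$ for $N=6$: there are no components to discard, no visible maps to low-genus curves, and no hope of running Chabauty--Coleman directly, since one cannot compute Mordell--Weil ranks of Jacobians of curves of such genus. The missing idea is that these curves carry an automorphism $\sigma$ of order $N$ induced by the dynamics, $(b_2,z)\mapsto(b_2,\phi(z))$, and one must pass to the quotient $C/\langle\sigma\rangle$, computed concretely by Morton's trace-map method: take the resultant of $\Phi_N^*$ with the denominator-cleared form of $t-\bigl(z+\phi(z)+\cdots+\phi^{N-1}(z)\bigr)$, checking that clearing denominators introduces no spurious roots. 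This quotient collapses $N=5$ to a genus-$0$ curve birational to $X^2+Y^2+Z^2=0$ (so no descent machinery is needed there at all), and collapses $N=6$ to a genus-$2$ hyperelliptic curve of Mordell--Weil rank $1$, where Chabauty plus the Mordell--Weil sieve determines the five rational points. So your instinct that period $6$ is the analogue of Stoll's computation, with rank less than genus as the crux, is correct, but it applies to the quotient curve, not to any ``component.'' One must then also lift back: rational points on the quotient need not come from rational points upstairs, so one checks the fibers of the trace map over those points and its indeterminacy locus, and for $N=6$ one verifies for each of the three surviving parameter values that the corresponding degree-$6$ factor of $\Phi_6^*$ has no rational root. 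Without the quotient-by-$\sigma$ step, your plan stalls exactly at the two periods that carry the real content of the theorem.
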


Following Faber~\cite{article:faber2015}, we say that a finite directed graph $G$ is \emph{admissible} (by quadratic maps defined over $\mathbb{Q}$) if there exists a quadratic map $\phi$ defined over $\mathbb{Q}$ such that $\PrePer(\phi, \mathbb{Q}) \supseteq G$ (where by $\supseteq$ we mean contains an isomorphic subgraph; we will also say $\phi$ \emph{admits} $G$). Similarly, a graph $G$ is \emph{inadmissible} if no such quadratic map exists. We say that a graph $G$ is \emph{realizable} (by quadratic maps defined over $\mathbb{Q}$) if $\PrePer(\phi, \mathbb{Q}) = G$ for some quadratic map $\phi$ defined over $\mathbb{Q}$ (and that $\phi$  \emph{realizes} $G$). The endomorphisms in Tables~\ref{table:r2}, \ref{table:r3} and \ref{table:r4} were verified to realize the corresponding graphs by using an algorithm of Hutz~\cite{article:hutz2015} that determines all $\mathbb{Q}$-preperiodic points of a given endomorphism of $\mathbb{P}^n$.

An endomorphism of $\mathbb{P}^1$ is called \emph{post-critically finite} (\emph{PCF}) if all of its critical points are preperiodic. Lukas, Manes and Yap~\cite{article:lukas-manes-yap2014} provided a complete classification (over $\overline{\mathbb{Q}}$) of the possible graphs realized by PCF-quadratic maps defined over $\mathbb{Q}$. We list all the  preperiodicity graphs of PCF-quadratic maps defined over $\mathbb{Q}$ with a $\QQ$-rational periodic critical point of period $2$ in Table~\ref{table:r2-pcf}. The examples in the table are taken directly from~\cite{article:lukas-manes-yap2014}. Therefore we are left with the task of classifying the graphs realizable by \emph{non}-PCF endomorphisms with a $\mathbb{Q}$-rational periodic critical point of period $2$.

We say that two rational functions $\phi,\psi:\mathbb{P}^1\rightarrow\mathbb{P}^1$ are \emph{linearly conjugate} if there exists an $f\in \PGL_2$ acting as a projective automorphism of $\mathbb{P}^1$ such that $\psi = \phi^f = f^{-1}\phi{f}$. Two conjugate rational functions have the same dynamical behavior, since if $P$ is periodic for $\phi$ of period $n$, then $f^{-1}(P)$ is $\psi$-periodic of period $n$ (similarly for preperiodic points). When $\psi, \phi$ and $f$ are all defined over the same number field $K$, then it is clear from this that $G_\phi=G_\psi$. Therefore when classifying realizable graphs we are actually interested in the conjugacy classes of quadratic maps rather than in the individual maps.

We prove Theorem~\ref{thm:main-thm1} by showing that the graphs in Table~\ref{table:n2e} are inadmissible. This suffices to prove that no other graph other than those in Tables~\ref{table:r2-pcf} and~\ref{table:r2} is realizable by a quadratic map defined over $\mathbb{Q}$ with a $\mathbb{Q}$-rational periodic critical point of period $2$. The idea is to construct an affine curve $C(G)$ for each graph $G$ in Table~\ref{table:n2e}, whose points parametrize $\PGL_2(\QQ)$ conjugacty classes of quadratic maps with a $\mathbb{Q}$-rational periodic critical point of period $2$ that admit $G$ and show that these curves have no $\mathbb{Q}$-rational points. In order to obtain the graphs in the tables, we used a recursive algorithm described in Lemma~\ref{lem:recursive} to generate a list of potentially realizable graphs (roughly eighty) having up to $14$ vertices (we first assumed Benedetto et al.'s conjecture~\cite{article:benedetto-et-al2014} that a quadratic map defined over $\mathbb{Q}$ has at most $14$ $\mathbb{Q}$-preperiodic points, however we do not rely on this conjecture in the proofs). This produces a Hasse diagram of graphs (the graphs are partially ordered by the isomorphic subgraph relation, see Proposition~\ref{prop:hasse} below); we go up the diagram constructing the curves $C(G)$ until we reach graphs which are inadmissible (one can visualize this as trimming an infinite tree until we are left with a finite tree).

\begin{qstn} \label{qstn:infinitely-realizable}
Except for graph R2P4 in Table~\ref{table:r2} which has a unique class realizing it (see Proposition~\ref{prop:R2P4} below), are there infinitely many conjugacy classes realizing each of the graphs in Table~\ref{table:r2}?
\end{qstn}

Assuming Conjecture~\ref{conj:main2}, it is clear that the graphs R2P5, R2P6, R2P7 in Table~\ref{table:r2} have infinitely many classes realizing them, since the relevant curves $C(G)$ have infinitely many rational points (since they have genus $0$ and a rational point, see Corollary~\ref{cor:genus0} below) and these graphs are maximal with respect to the isomorphic subgraph relation among the realizable graphs (see the Hasse diagram in Proposition~\ref{prop:hasse} below). However, one can hope to prove that all the graphs except for R2P4 are realizable by infinitely many classes \emph{without} relying on this conjecture by adapting the technique of Faber~\cite{article:faber2015} who proved a similar result for quadratic polynomials. 

The cases of quadratic maps defined over $\mathbb{Q}$ with a $\mathbb{Q}$-rational periodic critical point of period $3$ or $4$ are more complicated, as some of the curves parametrizing quadratic maps with a given preperiodicity graph have a large genus. The $11$ graphs which we know to be realizable by such quadratic maps (R3P0 is the only one that is PCF) are listed in Tables~\ref{table:r3} and \ref{table:r4}. 

\begin{qstn} \label{qstn:3-4complete}
Are the Tables~\ref{table:r3} and \ref{table:r4} complete? That is, are the graphs listed in these tables the only possible preperiodicity graphs of quadratic maps defined over $\mathbb{Q}$ with a $\mathbb{Q}$-rational periodic critical point of period $3$ or $4$?
\end{qstn}

Let $P$ be a periodic point of exact period $n$ for an endomorphism $\phi:\mathbb{P}^1\rightarrow\mathbb{P}^1$. The \emph{multiplier} of $\phi$ at $P$ is $\lambda_P(\phi) = (\phi^n)'(P)$ (we are assuming none of the iterates of $P$ are $\infty$; we can always make sure this is true by a linear conjugation). We say that a periodic point $P$ belongs to \emph{critical cycle} if its multiplier is $0$; this is equivalent to some iteration of $P$ being a critical point. In the preperiodicity graph of a non-PCF quadratic map, the sequence of iterations of $P$ is a simple cycle, all of whose vertices but one have \emph{indegree} $2$ (indegree is the number of arrows leading into a vertex, denoted by $\deg^{-}$), and exactly one vertex with indegree $1$. The vertex in the critical cycle with indegree $1$ is the image of the critical point in the cycle, and is called the \emph{critical image} of the critical point. Using these definitions, we now expand the scope of Conjecture~\ref{conj:main2}.

\begin{qstn} \label{qstn:3-4max}
Let $\phi$ be a quadratic map defined over $\mathbb{Q}$ with a $\mathbb{Q}$-rational periodic critical point of period $2,3$ or $4$. Is it true that $\phi$ has no $\mathbb{Q}$-periodic point of period greater or equal to $3$ that does not belong to a critical cycle?
\end{qstn}



We expect that providing a complete classification of the cases of a $\mathbb{Q}$-rational periodic critical point of period $3$ or $4$ will in effect provide a complete classification of quadratic maps defined over $\mathbb{Q}$ with a $\mathbb{Q}$-rational periodic critical point. 
\begin{qstn}
Let $\phi$ be a endomorphism of $\mathbb{P}^1$ of degree $2$ defined over $\mathbb{Q}$. Is it true that $\phi$ has no $\mathbb{Q}$-rational periodic critical point of period greater or equal to $5$?
\end{qstn}
So far, we have only been able to prove this for the case of a $\mathbb{Q}$-periodic critical point of period $5$
(see Proposition~\ref{prop:critical5} below).

We would like to mention similar types of classifications that have appeared in the literature. Manes~\cite{article:manes2008} gave a full classification of preperiodicity graphs of quadratic rational maps defined over $\mathbb{Q}$ having an automorphism by assuming that such maps have no $\mathbb{Q}$-periodic points of period greater than $4$; she used similar techniques as Poonen~\cite{article:poonen1998} and in turn inspired the style of proofs appearing this paper. The families of quadratic maps having a marked periodic point of period $\leq{6}$ has been studied by Blanc, Canci and Elkies~\cite{article:blanc-canci-elkies2015}. Benedetto et al.~\cite{article:benedetto-et-al2014} gave an explicit equation for the surface parametrizing quadratic maps having a marked preperiodic whose fifth iterate is a periodic point of period $2$. They proved that this surface is an elliptic surface of positive rank over $\mathbb{Q}(t)$. They proposed the conjecture that $\#\PrePer(\phi,\mathbb{Q})\leq{14}$ for $\phi$ a quadratic map of degree $2$ defined over $\mathbb{Q}$ based on experimental data. The classification of quadratic polynomials over quadratic number fields has been studied by Doyle, Faber and Krumm~\cite{article:doyle-faber-krumm2014}. In a recent preprint, Krumm~\cite{article:krumm2015} proved a local-global dynamical criterion for quadratic polynomials.

Finally, we remark that one can generalize the scope of the one-dimensional families of quadratic maps studied even further. Instead of looking at quadratic maps with a periodic critical point, one can take quadratic maps with a marked periodic point with a fixed multiplier $\lambda$. This gives the curves $\Per_n(\lambda)$ parametrizing conjugacy classes of quadratic maps with a marked periodic of period $n$ with multiplier $\lambda$. In this context, Poonen's classification of quadratic polynomials over $\QQ$ is classification of preperiodicity structures in $\Per_1(0)$ and our classification of quadratic maps defined over $\QQ$ with a $\QQ$-rational periodic critical point is classification of preperiodicity structures in $\Per_2(0)$. 
It is worth mentioning that Milnor~\cite{article:milnor1993} proved that $\Per_1(\lambda)$ and $\Per_2(\lambda)$ (for $\lambda\neq{1}$) are embedded as lines in the moduli space $M_2$ of all quadratic maps modulo linear conjugation, while this is not true for $Per_n(\lambda)$ for $n\geq{3}$. This gives another correlation between $\Per_1(0)$ and $\Per_2(0)$ and for our studying the latter.


\subsection*{Acknowledgments}
We would like to thank Joseph H.\ Silverman for suggesting to the first author to look at the curves $\Per_n(\lambda)$. We also thank Fabrizio Barroero, Laura Capuano, Xander Faber, Patrick Ingram, Vincenzo Mantova, Olaf Merkert, Michael Stoll and Umberto Zannier for their comments and suggestions.

\section{Background}
A point $P\in\mathbb{P}^1(\mathbb{C})$ is a \emph{critical point} of an endomorphism $\phi:\mathbb{P}^1\rightarrow\mathbb{P}^1$ defined over $\mathbb{C}$ if $\phi'(P)=0$ (If $P=\infty$ or $\phi(P)=\infty$ we first need to conjugate $\phi$ by a projective automorphism in order to move $P$ and $\phi(P)$ away from infinity). It is not difficult to check that an endomorphism of degree $d$ has $2d-2$ critical points, counted with multiplicity (see Silverman~\cite[\S{1.2}, Theorem 1.1]{book:silverman2007}). This means that a quadratic map has exactly $2$ critical points (a critical point of a quadratic map can have no multiplicity greater than $1$) and is post-critically finite (PCF) if both of these points are preperiodic.

A quadratic map defined over $K$ has a representation as a rational function 
\begin{equation} \label{eq:quadratic-map}
\phi(z)=\frac{a_2z^2+a_1z+a_0}{b_2z^2+b_1z+b_0}=F(z)/G(z)
\end{equation}
with coefficients $a_i,b_i\in{K}$, $0\leq{i}\leq{2}$. To be a proper degree $2$ map, the resultant $Res(F,G)$ must be nonzero. The endomorphism can also be presented in homogeneous coordinates as
\[
\phi: [u:v] \mapsto [a_2u^2+a_1uv+a_0v^2:b_2u^2+b_1uv+b_0v^2] = [F_1(u,v):G_1(u,v)].
\]

For each quadratic map $\phi$ there exists a sequence of polynomials $\left(\Phi_{i,\phi}^*(z)\right)_{i=1}^{\infty}$ called the \emph{dynatomic polynomials} of $\phi$ (see Silverman~\cite[\S{4.2}]{book:silverman2007}). These polynomials have the property that every $P\in \Per(\phi, \bar{K})$ of period $N$  is a root of $\Phi_{N,\phi}^*(z)$. The other implication is not always true; the roots of $\Phi_{N,\phi}^*(z)$ are called \emph{formal periodic points} of period $N$. Formal periodic points that are not periodic of period $N$ are periodic points of period dividing $N$ that have a ``multiplicity". This multiplicity is not directly related to the multiplicity of the roots of $\Phi_{N,\phi}^*(z)$. For example, for the map
\begin{equation*}
\phi(z)=\frac {{z}^{2}+5\,z+3}{{z}^{2}}
\end{equation*}
the point $-1$ is a double root of $\Phi_{1,\phi}^*(z)$, but is not a root of $\Phi_{2,\phi}^*(z)$. On the other hand, for the map
\begin{equation*}
\phi(z)=\frac {{z}^{2}-z+1}{{z}^{2}}
\end{equation*}
the point $1$ is a simple root of $\Phi_{1,\phi}^*(z)$ and a double root of $\Phi_{2,\phi}^*(z)$.

We briefly recall the definition of $\Phi_{n,\phi}^*$. Let $\phi^n(u:v) = [F_n(u,v), G_n(u,v)]$ be the $n$-th iteration of $\phi$ and define
\[
\Phi_{n,\phi}(u,v) = vF_n(u,v) - uG_n(u,v).
\]
We then define the dynatomic polynomials by
\begin{equation*}
\Phi_{n,\phi}^*(u,v) = \prod_{k|n} (\Phi_{k,\phi}(u,v))^{\mu(n/k)},
\end{equation*}
where $\mu$ is the Moebius mu function (the fact that these are actually polynomials was proved by Morton and Silverman~\cite{article:morton-silverman1995}). We will drop the $\phi$ subscript in the notation, since it is usually clear which map we are referring to, and just denote the dynatomic polynomials by $\Phi_n^*$.

The degrees of the dynatomic polynomials provide bounds on the number of periodic points of period $N\geq{1}$. We collect some of this information in the next lemma.

\begin{lem} \label{lem:periodic-limits}
Let $\phi$ be a quadratic map defined over a field $K$. Then $\#\Per_1(\phi, K) \leq{3}$ and \\ $\#\Per_2(\phi, K) \leq{2}$.
\end{lem}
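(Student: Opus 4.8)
The plan is to bound $\#\Per_N(\phi,K)$ for $N=1,2$ by the degree of the corresponding dynatomic polynomial. Since $\Per_N(\phi,K)\subseteq\Per_N(\phi,\bar{K})$ and, as recalled above, every point of period $N$ is a root of $\Phi_N^*$, it suffices to show that $\Phi_1^*$ has degree $3$ and $\Phi_2^*$ has degree $2$, and then to invoke the fact that a nonzero binary form of degree $m$ has at most $m$ zeros in $\mathbb{P}^1(\bar{K})$.

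To compute these degrees cleanly I would work entirely with the homogeneous forms, which avoids any degree drop at $\infty$. Writing $\phi^n=[F_n(u,v):G_n(u,v)]$ with $F_n,G_n$ homogeneous of degree $2^n$ (since $\deg\phi^n=2^n$), the form $\Phi_n=vF_n-uG_n$ is homogeneous of degree $2^n+1$; in particular $\deg\Phi_1=3$ and $\deg\Phi_2=5$. For period $1$ one has $\Phi_1^*=\Phi_1$ (as $\mu(1)=1$), so $\deg\Phi_1^*=3$ and hence $\#\Per_1(\phi,K)\le 3$. For period $2$ the Moebius definition gives $\Phi_2^*=\Phi_2/\Phi_1$ (using $\mu(2)=-1$); this quotient is a genuine polynomial by Morton and Silverman, the point being that every fixed point is periodic of period dividing $2$ so that $\Phi_1\mid\Phi_2$. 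Therefore $\deg\Phi_2^*=5-3=2$, which yields $\#\Per_2(\phi,K)\le 2$.

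The only real point to check is that these degree counts are not illusory, i.e.\ that neither $\Phi_1$ nor $\Phi_2^*$ vanishes identically and that no unexpected cancellation of leading terms lowers the stated degree. The homogeneous viewpoint makes this transparent. If $\Phi_1\equiv 0$ then $\phi$ would fix every point and hence be the identity, contradicting $\deg\phi=2$; and if $\Phi_2\equiv 0$ then every point would be periodic of period dividing $2$, again impossible for a degree $2$ map. The binary forms $\Phi_1$ and $\Phi_2$ thus carry their stated degrees $3$ and $5$ (a vanishing leading coefficient merely signals that $\infty$ is among the zeros, and is already accounted for projectively), and the only remaining bookkeeping is the elementary Moebius cancellation in $\Phi_2^*=\Phi_2/\Phi_1$. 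I expect this last nonvanishing and divisibility verification to be the main, though minor, obstacle.
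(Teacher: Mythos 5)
Your proof is correct and takes essentially the same route as the paper: the paper's entire proof is the remark that the bounds are ``immediate from $\deg\Phi_1^*=3$ and $\deg\Phi_2^*=2$,'' which is precisely the degree computation you carry out (via $\deg\Phi_n = 2^n+1$ and the Moebius quotient $\Phi_2^*=\Phi_2/\Phi_1$). The homogeneous-form bookkeeping, the nonvanishing checks, and the appeal to Morton--Silverman for the divisibility are exactly the details the paper leaves implicit.
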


\begin{proof}
This is immediate from $\deg{\Phi_1^*} = 3$ and $\deg{\Phi_2^*} = 2$. 
\end{proof}

\begin{rem} \label{rem:2-fixed}
Suppose the three roots of $\Phi_1^*$ are distinct. If there are two fixed points defined over $\mathbb{Q}$ then the third fixed point is also defined over $\mathbb{Q}$. Therefore the only way a graph $G_\phi$ (over $\mathbb{Q}$) has exactly two fixed points, is if $\Phi_1^*$ has a multiple root.
\end{rem}

\section{Normal forms for realizable graphs} \label{sec:normal}

\begin{prop}
A graph realizing a non-PCF quadratic map defined over a number field $K$ with a $K$-rational periodic critical point has an odd number of vertices (i.e., there is an odd number of preperiodic points defined over $K$).
\end{prop}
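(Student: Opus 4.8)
The plan is to count the edges of $G_\phi$ in two ways and extract a parity statement. Since the image of a preperiodic point is preperiodic, the restriction $\phi\colon \PrePer(\phi,K)\to \PrePer(\phi,K)$ is a well-defined map, so every vertex of $G_\phi$ has out-degree exactly $1$. Consequently the number of edges equals the number of vertices, and since the number of edges also equals $\sum_{Q}\deg^{-}(Q)$, it suffices to show that $\sum_{Q}\deg^{-}(Q)$ is odd, i.e.\ that $G_\phi$ has an odd number of vertices of odd indegree. In fact I would show there is exactly one such vertex.

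The key local input is that $\phi$ has degree $2$, so each $Q\in\mathbb{P}^1(\overline{\mathbb{Q}})$ has exactly two preimages counted with multiplicity, and a preimage occurs with multiplicity $2$ precisely when it is a critical point (equivalently, when $Q$ is a critical value). First I would record that any preimage of a preperiodic point is preperiodic, so for a vertex $Q$ of $G_\phi$ every $K$-rational preimage of $Q$ is again a vertex. Next, for a vertex $Q$ that is \emph{not} a critical value, its two preimages are the roots of a quadratic equation over $K$ with distinct roots; such an equation either splits over $K$ or is irreducible, so the two preimages are either both $K$-rational or form a Galois-conjugate pair with neither lying in $\mathbb{P}^1(K)$. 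Hence $\deg^{-}(Q)\in\{0,2\}$ is even for every non-critical-value vertex.

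It remains to analyze the critical values. By the background, $\phi$ has exactly two distinct critical points $c_1,c_2$ (each of multiplicity $1$), with distinct critical values $v_1=\phi(c_1)$ and $v_2=\phi(c_2)$; distinctness follows since $v_1=v_2$ would force a common value with total preimage multiplicity $4>2$. Let $c_1$ be the $K$-rational periodic critical point. Since $\phi$ is non-PCF and $c_1$ is preperiodic, the second critical point $c_2$ is \emph{not} preperiodic, and because $c_2\mapsto v_2$ with preperiodicity inherited in both directions along this single edge, $v_2$ is not preperiodic either; thus $v_2$ is not a vertex of $G_\phi$. Therefore among the vertices the only critical value is $v_1$, whose unique preimage is the double point $c_1$, giving $\deg^{-}(v_1)=1$. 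Combined with the previous paragraph, $v_1$ is the one and only vertex of odd indegree, so $\sum_{Q}\deg^{-}(Q)$ is odd and hence so is the number of vertices.

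The main point to get right is the claim that $v_2\notin\PrePer(\phi,K)$: this is exactly where the non-PCF hypothesis is used, and it is what prevents a second vertex of indegree $1$ from appearing (which would restore even parity). Everything else reduces to the elementary observation that, away from the critical values, preimages come in pairs that are Galois-stable over $K$ and so contribute to indegrees only in even amounts.
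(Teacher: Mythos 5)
Your proof is correct and takes essentially the same approach as the paper's: both count vertices via $\sum_{P}\deg^{+}(P)=\sum_{P}\deg^{-}(P)$ and observe that exactly one vertex (the critical image of the rational periodic critical point) has in-degree $1$, while every other vertex has in-degree $0$ or $2$. The only difference is that you spell out details the paper leaves implicit, namely the Galois-stability of fibers over non-critical-value vertices and the fact that the second critical value is not preperiodic because $\phi$ is non-PCF.
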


\begin{proof}
Let $G_\phi$ be a graph realizing a non-PCF quadratic map $\phi$ with a rational periodic critical point. For each vertex $P$ in the graph $G_\phi$ we have $\deg^{+}(P) = 1$ (the notation $\deg^{+}$ is for the \emph{outdegree} of a vertex of a directed graph, i.e.\ the number of arrows leading out of this vertex) since $P$ corresponds to a preperiodic point which has only one image under $\phi$. For each such $P$ we also have $\deg^{-}(P)\leq{2}$ and we can only have $\deg^{-}(P)=1$ if $P$ is a critical value. Since there is only one critical preperiodic point, we have $\deg^{-}(P)=1$ for exactly one vertex $P$. Therefore we have
\begin{equation*}
\#\PrePer(\phi,K) = \sum_{P\in{G}} 1 = \sum_{P\in{G}} \deg^{+}(P) = \sum_{P\in{G}} \deg^{-}(P) = 2k+1
\end{equation*}
where $k$ is the number of vertices with $\deg^{-}(P) = 2$.
\end{proof}

We denote the following recursively defined (infinite) set of finite directed graphs by $\mathcal{G}$:
\begin{itemize}
\item \emph{Base step:} If $G$ is a finite directed graph consisting of cycle $\mathfrak{C}$ of length $N$, such that for all but one vertices $P\in\mathfrak{C}$ there exists a vertex $Q$ with an arrow from $Q$ to $P$ (i.e., there are $2N-1$ vertices in the graph), then $G\in\mathcal{G}$. In other words, by the base step of the recursion the set $\mathcal{G}$ contains the graphs
\begin{equation*}
\xygraph{ 
		!{<0cm,0cm>;<1cm,0cm>:<0cm,1cm>::} 
		!{(-2,0) }*+{\bullet}="a" 
		!{(0,0) }*+{\bullet}="b" 
		!{(2,0) }*+{\bullet}="c" 
		"a":"b"
		"b":@/_/"c"
		"c":@/_/"b"
	},
	\xygraph{ 
		!{<0cm,0cm>;<2cm,0cm>:<0cm,1cm>::} 		
		!{(0,1) }*+{\bullet}="a" 
		!{(1,1) }*+{\bullet}="b" 
		!{(0.5,0) }*+{\bullet}="c" 
		!{(-0.5,0) }*+{\bullet}="d"
		!{(1.5,1	) }*+{\bullet}="e"		
		"a":@/^/"b"
    		"b":@/^/"c"
      		"c":@/^/"a"
    		"d":@/^/"a"
      		"e":@/^/"c"       
	},
	\xygraph{ 
		!{<0cm,0cm>;<1cm,0cm>:<0cm,1cm>::} 
		!{(0,1.5) }*+{\bullet}="a" 
		!{(1,1.5) }*+{\bullet}="b" 
		!{(1,0.5) }*+{\bullet}="c" 
		!{(0,0.5) }*+{\bullet}="d"
		!{(-1,2) }*+{\bullet}="e"
		!{(2,0	) }*+{\bullet}="f"
		!{(-1,0) }*+{\bullet}="g"		
		"a":@/^/"b"
    		"b":@/^/"c"
      		"c":@/^/"d"
       		"d":@/^/"a"     		
    		"e":"a"
      		"f":"c"
      		"g":"d"
	},\ldots
\end{equation*}
\item \emph{Recursive step 1:} If $G\in\mathcal{G}$, let $G'$ be a graph defined by adding to $G$ a new cycle $\mathfrak{C}'$ and for each $P'\in\mathfrak{C}'$ adding a vertex $Q'$ with an arrow from $Q'$ to $P'$; then $G'\in\mathcal{G}$.
\item \emph{Recursive step 2:} Let $G\in\mathcal{G}$ and let $P''$ be a point not on a cycle and such that $\deg^{-}(P'') = 0$; let $G''$ be the graph constructed from $G$ by adding two vertices $Q_1,Q_2$ and arrows from $Q_1$ and $Q_2$ to $P''$. Then $G''\in\mathcal{G}$.
\end{itemize}

\begin{lem} \label{lem:recursive}
If $G_\phi$ is the preperiodicity graph of a non-PCF quadratic map $\phi$ defined over a number field $K$ with a $K$-rational periodic critical point then $G\in\mathcal{G}$.
\end{lem}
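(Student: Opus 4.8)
The plan is to first pin down the local combinatorial structure of $G_\phi$ directly from the geometry of $\phi$, and then run an induction on the number of vertices that reverses the three recursive steps defining $\mathcal{G}$.

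For the local structure, I would reuse the proof of the previous proposition: every vertex has $\deg^{+} = 1$ and $\deg^{-} \le 2$, the equality $\deg^{-}(P) = 1$ forces $P$ to be a critical value, and the unique critical preperiodic point yields exactly one vertex $P^{*}$ (the critical image) with $\deg^{-}(P^{*}) = 1$. Since $\phi$ is non-PCF with periodic critical point $c$, the second critical point is not preperiodic, so the only critical value lying in $\PrePer(\phi, K)$ is $P^{*} = \phi(c)$. To this I would add the decisive dichotomy: if $P \ne P^{*}$ is any vertex, then $P$ is not a critical value, so its two geometric preimages are distinct and are the roots of a binary quadratic form over $K$; such a form splits over $K$ as soon as it has one $K$-rational root, so either both preimages are $K$-rational or neither is, whence $\deg^{-}(P) \in \{0,2\}$. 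Finally $P^{*} = \phi(c)$ is periodic, hence lies on the cycle $\mathfrak{C}_0$ through $c$ (the \emph{critical cycle}), with unique preimage its cycle-predecessor $c$; and every cycle vertex has $\deg^{-} \ge 1$, so $\deg^{-} = 2$ on all cycle vertices except $P^{*}$.

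With this in hand I would prove the following purely combinatorial statement by strong induction on the number of vertices $V$: any finite graph in which every vertex has outdegree $1$, exactly one vertex $P^{*}$ has $\deg^{-} = 1$ and lies on a cycle, and every other vertex has $\deg^{-} \in \{0,2\}$, belongs to $\mathcal{G}$. Measuring the depth of a tree (non-cycle) vertex as its distance to the cycle it feeds into, the cases are exhaustive. Case 1: some tree vertex has $\deg^{-} = 2$; choosing such a vertex $P$ of maximal depth forces both its preimages to be leaves, since a deeper preimage would violate maximality and a preimage cannot equal $P^{*}$ (which sits on a cycle while $P$ does not). Deleting these two sibling leaves gives a valid graph on $V-2$ vertices, which lies in $\mathcal{G}$ by induction, and $G$ is recovered by Recursive step 2 applied to the now-leaf $P$. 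Case 2: every tree vertex is a leaf, so all hanging trees are single leaves. If there is a cycle $\mathfrak{C}$ not containing $P^{*}$, each of its vertices has $\deg^{-} = 2$ with exactly one leaf preimage; deleting $\mathfrak{C}$ together with these leaves (an entire weakly connected component) leaves a smaller valid graph in $\mathcal{G}$, and $G$ is recovered by Recursive step 1. Otherwise the only cycle is the critical cycle and $G$ is exactly a Base step graph, a cycle of length $N$ in which every vertex but $P^{*}$ carries a single leaf preimage; this is the base of the induction.

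I expect the main work to be the local-structure step, and specifically the dichotomy $\deg^{-}(P) \in \{0,2\}$ for $P \ne P^{*}$: this is precisely what prevents a non-critical vertex from ever acquiring a single rational preimage, and it is what makes Recursive step 2 add preimages in pairs and Recursive step 1 attach a full layer of preimages to the new cycle. The induction itself is routine once the correct reductions (delete a deepest pair of sibling leaves, or peel off a non-critical cycle with its leaves) are matched to the inverses of the recursive steps; the only points needing care are verifying that the deleted sets form genuine connected components and that $P^{*}$ is never touched, both of which follow from $P^{*}$ lying on the critical cycle.
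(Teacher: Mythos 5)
Your proof is correct, but it runs in the opposite direction from the paper's. The paper argues bottom-up by maximality: it considers the (finite, nonempty) collection of subgraphs of $G_\phi$ that lie in $\mathcal{G}$, picks a maximal element $G'$, and shows $G'=G_\phi$ because any vertex of $G_\phi$ outside $G'$ would let one enlarge $G'$ by recursive step 1 (a maximal forward path from a missing vertex must contain a cycle, which is adjoined with its preimages) or step 2 (a vertex of in-degree $0$ in $G'$ but in-degree $2$ in $G_\phi$ gets its two preimages adjoined), contradicting maximality. You instead argue top-down: a strong induction on the number of vertices that inverts the recursion, deleting a deepest pair of sibling leaves (inverse of step 2) or peeling off a non-critical cycle together with its attached leaves as a whole weakly connected component (inverse of step 1), until only the base-step graph on the critical cycle remains. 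Both arguments rest on the same two facts -- the unique in-degree-$1$ vertex is the critical image sitting on the critical cycle, and every other vertex has in-degree $0$ or $2$ -- but you make the second fact's arithmetic content explicit (the fiber over a non-critical $K$-rational point is cut out by a binary quadratic form over $K$, so its two preimages are both $K$-rational or both irrational), whereas the paper only asserts it in the proof of the preceding parity proposition. What your route buys is a clean, reusable, purely combinatorial characterization of $\mathcal{G}$ (out-degree $1$, exactly one in-degree-$1$ vertex lying on a cycle, all others of in-degree $0$ or $2$); what the paper's route buys is brevity, since the maximality argument dispenses with your case analysis and the verification that the deleted sets are genuine components. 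Your case split is exhaustive and the inductive reductions do match the hypotheses of the recursive steps (in particular, after deleting the two sibling leaves the parent is a non-cycle vertex of in-degree $0$, as step 2 requires), so there is no gap.
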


\begin{proof}
Let $\mathcal{G}'$ be the subset of $\mathcal{G}$ of graphs $G$ such that $G\subseteq{G_\phi}$. The set $\mathcal{G}'$ is nonempty, since $G_\phi$ must contain a critical cycle, and this critical cycle together with the preimages of non-critical values on this cycle is an element of $\mathcal{G}$ by the base step of the definition of $\mathcal{G}$. The set $\mathcal{G}'$ is partially ordered by isomorphic subgraph partial ordering, and it is finite because the set of subgraphs of any finite directed graph is finite. Therefore there exists a maximal element $G'\in\mathcal{G}'$. We prove that $G'=G_\phi$:
\begin{itemize}
\item There cannot exist a vertex $P\in{G'}$ with in-degree $0$ in $G'$ that has in-degree $2$ in $G_\phi$, since otherwise we can add the two preimages of $P$ to $G'$ and construct a graph in $\mathcal{G}$ (using recursion step $2$) that strictly contains $G'$, contradicting the maximality of $G'$. 
\item There cannot exist a vertex $P\in{G'}$ with in-degree $0$ in $G'$ that has in-degree $1$ in $G$, since the only vertex with in-degree $1$ in $G_\phi$ is on the critical cycle, and its preimage is in $G'$.
\item There is no path from a vertex of $G_\phi$ that is not in $G'$ that ends in a vertex of $G'$. This is true from the last two items, since the first vertex of $G'$ in the path will have to satisfy one of the conditions described in the two items, an impossibility.
\item Since the out-degree of each vertex in $G_\phi$ is $1$, if we start from any vertex $P$ of $G_\phi$ not in $G'$, any maximal path starting from $P$ must contain a cycle. Adding the cycle and its direct preimages to $G'$ creates a subgraph of $G_\phi$ that strictly contains $G'$ and is in $\mathcal{G}$ by recursion step 1 in the definition of $\mathcal{G}$, contradicting the maximality of $G'$. 
\end{itemize}
We have thus proved that there is no vertex of $G_\phi$ that is not in $G'$. However, from this we know that $G'$ and $G_\phi$ must be equal, since both have the property that the outdegree of each vertex is $1$. 
\end{proof}

%

We now present some normal forms for a quadratic map with a marked periodic critical point that will help us in the sequel. By \emph{normal form} corresponding to a graph $G$, we mean a parametrization $t\mapsto\phi_t$ (parameter name may vary) such that for all but finitely many values of parameter $t$ the quadratic map $\phi_t$ admits the graph $G$. Moreover, for each conjugacy class of a quadratic map admitting the graph $G$ there exists a unique value of $t$ such that $\phi_t$ is in the conjugacy class.

\begin{lem} \label{lem:cyc2}
A quadratic map with a periodic critical point of period $2$ is linearly conjugate to a map of the form
\begin{equation} \label{eq:cyc2}
\phi(z)= \frac{a_1z+1}{b_2z^2},
\end{equation}
having $0$ as a periodic critical point of period $2$ and $\infty$ as the critical value of $0$. The graph $G_\phi$ contains the graph
\begin{equation*}
\xygraph{ 
		!{<0cm,0cm>;<1cm,0cm>:<0cm,1cm>::} 
		!{(-2,0) }*+{\bullet_{-1/a_1}}="a" 
		!{(0,0) }*+{\bullet_{0}}="b" 
		!{(2,0) }*+{\bullet_{\infty}}="c" 
		"a":"b"
		"b":@/_/"c"
		"c":@/_/"b"
	}
\end{equation*}
\end{lem}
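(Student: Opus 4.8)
The plan is to use a single linear change of coordinates to move the period-$2$ critical orbit to the pair $\{0,\infty\}$ and then read off the constraints this imposes on a general quadratic map. Write the period-$2$ critical point as $P$ and its image (the critical value) as $Q=\phi(P)$, so that $\phi(Q)=P$ and $P\neq Q$. Since $P,Q\in\PP^1(K)$, I would choose $f\in\PGL_2(K)$ with $f(0)=P$ and $f(\infty)=Q$ (these two conditions leave a one-parameter family of Möbius maps, all defined over $K$, so rationality is preserved). Setting $\psi=\phi^f$, the point $0$ is then a critical point of $\psi$ with $\psi(0)=\infty$ and $\psi(\infty)=0$, hence a periodic critical point of exact period $2$.

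Next I would impose these three conditions on a general quadratic map $\psi=F/G$ written as in \eqref{eq:quadratic-map}, using the homogeneous description $[F_1:G_1]$ to treat the points at infinity cleanly. The condition $\psi(\infty)=0$ reads $[a_2:b_2]=[0:1]$, forcing $a_2=0$ and $b_2\neq 0$. The condition $\psi(0)=\infty$ reads $[a_0:b_0]=[1:0]$, forcing $b_0=0$ and $a_0\neq0$. The one genuinely delicate point is the critical condition at $0$: since $\psi(0)=\infty$ we cannot simply set $\psi'(0)=0$, so instead I would phrase ramification over $\infty$ as the requirement that $0$ be a \emph{double} point of the fibre $\psi^{-1}(\infty)$, i.e.\ a double root of the denominator $G_1(u,v)=b_2u^2+b_1uv+b_0v^2$; together with $b_0=0$ this forces $b_1=0$. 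These constraints leave
\[
\psi(z)=\frac{a_1z+a_0}{b_2z^2},\qquad a_0\neq0,\ b_2\neq0,
\]
and dividing numerator and denominator by $a_0$ (and renaming the resulting constants) puts $\psi$ in the asserted form $\phi(z)=(a_1z+1)/(b_2z^2)$. Coprimality of numerator and denominator, hence genuine degree $2$, follows since the only affine zero of the numerator is $-1/a_1\neq0$.

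Finally I would verify the claimed subgraph by computing the relevant fibres. The preimage of $\infty$ is the zero locus of $b_2z^2$, namely $0$ with multiplicity two, so $\deg^{-}(\infty)=1$ and $\infty$ is the critical image; the preimage of $0$ is the zero locus of the numerator together with $\infty$ (as $\psi(\infty)=0$), giving the two points $-1/a_1$ and $\infty$, so $\deg^{-}(0)=2$. This yields exactly the displayed arrows $-1/a_1\to 0$, $0\to\infty$ and $\infty\to 0$. The main obstacle is the correct handling of the critical (ramification) condition at the point $0$ whose image is $\infty$; once this is recast as a double root of the denominator in homogeneous coordinates, the remaining steps are a routine normalization.
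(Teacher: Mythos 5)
Your proposal is correct and follows essentially the same route as the paper's proof: conjugate the critical point and its image to $0$ and $\infty$, deduce $a_2=b_0=0$ from the orbit conditions, deduce $b_1=0$ from ramification at $0$ (which the paper also phrases as $0$ being the only preimage of $\infty$), scale $a_0=1$, and solve $\phi(\alpha)=0$ for the remaining preimage $-1/a_1$. Your homogeneous-coordinates treatment of the ramification condition is just a slightly more explicit version of the paper's argument.
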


\begin{proof}
Using the well known properties of the automorphisms in $\PGL_2$, we can replace any three points $P,Q,R$ in the preperiodicity graph of $\phi$ with the points $0, \infty,1$ by conjugating by the appropriate element of $\PGL_2(\mathbb{Q})$. So we can replace the periodic critical point with $0$ and its image with $\infty$. Since $\phi(0)=\infty$ we get that $b_0 = 0$ in the presentation~\eqref{eq:quadratic-map} of $\phi$, and $\phi(\infty)=0$ implies $a_2=0$. For $0$ to be a critical point we must have $b_1=0$, so that the only preimage of $\infty$ is $0$. Finally, we know $a_0\neq{0}$, since otherwise the map $\phi$ degenerates, and since we can always scale the coefficients by a constant, we can scale them so that $a_0=1$. Checking that the second preimage of $0$ is $-1/a_1$ is done by solving the equation $\phi(\alpha) = 0$.
\end{proof}

Any quadratic map with a periodic critical point of period $2$ has infinitely many representatives of the form \eqref{eq:cyc2} since we have only chosen the images of $0$ and $\infty$. Therefore this ``normal form" is not canonical, but the extra freedom of choice gives flexibility in the inadmissibility proofs (see Section~\ref{sec:inadmissible}) of some of the graphs in Table~\ref{table:n2e}. We can  remove this ambiguity by replacing the point $-1/a_1$ with $1$, as in the following lemma.

\begin{lem} \label{lem:cyc2-unique}
A normal form for a quadratic map with a periodic critical point of period $2$ (i.e.\ admitting graph R2P1) is
\begin{equation} \label{eq:cyc2-unique}
\phi(z)= \frac{-z+1}{b_2z^2},
\end{equation}
having $0$ as the periodic critical point of period $2$, $\infty$ as the critical value of $0$ and $1$ as the second preimage of $0$. The graph $G_\phi$ contains the graph
\begin{equation*}
\xygraph{ 
		!{<0cm,0cm>;<1cm,0cm>:<0cm,1cm>::} 
		!{(-2,0) }*+{\bullet_{1}}="a" 
		!{(0,0) }*+{\bullet_{0}}="b" 
		!{(2,0) }*+{\bullet_{\infty}}="c" 
		"a":"b"
		"b":@/_/"c"
		"c":@/_/"b"
	}
\end{equation*}
\end{lem}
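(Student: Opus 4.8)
The plan is to start from the normal form obtained in Lemma~\ref{lem:cyc2}, namely $\phi(z)= (a_1z+1)/(b_2z^2)$, and use the remaining freedom in the choice of $\PGL_2$-conjugation to pin down the second preimage of $0$. Lemma~\ref{lem:cyc2} already normalizes two points (the critical point to $0$ and its image to $\infty$), but as the remark following that lemma observes, this still leaves a one-parameter family of conjugations available; the idea is to spend exactly this freedom to send the remaining marked point $-1/a_1$ to $1$.

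Concretely, I would first identify the subgroup of $\PGL_2(\QQ)$ that \emph{fixes} the already-normalized data, i.e.\ those $f\in\PGL_2$ fixing both $0$ and $\infty$. These are precisely the scalings $f(z)=\lambda z$ with $\lambda\in\QQ^\times$. One then conjugates $\phi$ by such an $f$ and checks that $\phi^f$ is again of the shape $(a_1'z+1)/(b_2'z^2)$ — that is, the normal form~\eqref{eq:cyc2} is preserved by scalings — while computing how $a_1$ and $b_2$ transform. The key computation is that under $z\mapsto\lambda z$ the second preimage of $0$, namely $-1/a_1$, also scales, so that by choosing $\lambda$ appropriately we can force this point to land on $1$. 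After this, the coefficient multiplying $z$ in the numerator becomes $-1$ (since the second preimage is $-1/a_1'=1$ forces $a_1'=-1$), giving exactly the form~\eqref{eq:cyc2-unique}; the remaining parameter $b_2$ is the genuine modulus.

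The second half of the statement — that this normal form is a genuine \emph{normal form} in the strong sense defined just before the lemma, meaning each conjugacy class admitting R2P1 corresponds to a \emph{unique} value of the parameter — requires checking uniqueness, not just existence. For this I would argue that once $0$, $\infty$, and the second preimage $1$ are all fixed as labeled points of the graph, the only conjugation preserving all three is the identity: an element of $\PGL_2$ fixing the three distinct points $0,1,\infty$ is the identity. Hence no two distinct values of $b_2$ can be conjugate via a map respecting the graph structure, so the parameter $b_2$ is a faithful coordinate on the relevant conjugacy classes. I would also note the degenerate values of $b_2$ to exclude (those for which the resultant vanishes or the map fails to actually exhibit R2P1), matching the ``all but finitely many'' clause in the definition of normal form.

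The main obstacle I anticipate is purely bookkeeping rather than conceptual: verifying that the scaling conjugation keeps the map inside the family~\eqref{eq:cyc2} and correctly tracking the transformation of the coefficient $a_1$ so that the target value $-1$ is achieved exactly. One must be careful that the scaling $\lambda$ solving $-1/a_1'=1$ is rational and nonzero (so that it is a legitimate element of $\PGL_2(\QQ)$), and that it does not accidentally collapse the map; this is where the hypothesis $a_1\neq 0$ implicitly enters, since the point $-1/a_1$ must be a genuine finite nonzero point distinct from $0$ and $\infty$ for the normalization to make sense. Everything else follows from the transitivity of $\PGL_2$ on triples of distinct points, which is the same tool already invoked in the proof of Lemma~\ref{lem:cyc2}.
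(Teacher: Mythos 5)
Your proposal is correct and follows essentially the same route as the paper, which gives no separate proof for this lemma but states just before it that the ambiguity in form~\eqref{eq:cyc2} is removed ``by replacing the point $-1/a_1$ with $1$'' — i.e.\ spending the residual $\PGL_2$-freedom (equivalently, normalizing the triple consisting of the critical point, its critical value, and the second preimage of $0$ to $0,\infty,1$), after which $\phi(1)=0$ forces $a_1=-1$. Your explicit tracking of the scaling action and the uniqueness argument via the rigidity of $0,1,\infty$ are correct refinements of the same idea.
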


\begin{lem} \label{lem:cyc2-1}
A normal form of a quadratic map having a periodic critical point of period $2$ and a fixed point (i.e.\ admitting graph R2P2) is
\begin{equation} \label{eq:cyc2-1}
\phi(z)= \frac{a_1z+1}{(a_1+1)z^2},
\end{equation}
having $0$ as a periodic critical point of period $2$, $\infty$ as the critical value of $0$ and $1$ as a fixed point. The graph $G_\phi$ contains the graph
\begin{equation*}
\xygraph{ 
		!{<0cm,0cm>;<1cm,0cm>:<0cm,1cm>::} 
		!{(-2,0) }*+{\bullet_{-1/a_1}}="a" 
		!{(0,0) }*+{\bullet_{0}}="b" 
		!{(2,0) }*+{\bullet_{\infty}}="c" 
		!{(4,0) }*+{\bullet_{-1/(a_1+1)}}="d"
		!{(6,0) }*+{\bullet_{1}}="e" 		 
		"a":"b"
		"b":@/_/"c"
		"c":@/_/"b"
		"d":"e"
		"e":@(r,lu) "e"
	}
\end{equation*}
\end{lem}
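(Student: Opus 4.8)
The plan is to bootstrap from Lemma~\ref{lem:cyc2}, which already conjugates any quadratic map with a period-$2$ critical point into the shape $\phi(z)=\frac{a_1z+1}{b_2z^2}$ with the critical $2$-cycle sitting at $\{0,\infty\}$. The only automorphism preserving this partial normalization is a scaling $f(z)=\lambda z$ (it fixes both $0$ and $\infty$), and the idea is to spend this last degree of freedom to place a rational fixed point at $z=1$. A direct computation shows $\phi^f(z)=\frac{(a_1\lambda)z+1}{(b_2\lambda^3)z^2}$, so the family is stable under scaling via $(a_1,b_2)\mapsto(a_1\lambda,b_2\lambda^3)$, with the numerator constant staying equal to $1$.

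First I would use the hypothesis that the map admits R2P2, which forces a rational fixed point $p\notin\{0,\infty\}$. The fixed-point equation $\phi(z)=z$ reads $b_2z^3-a_1z-1=0$, so its rational roots are exactly the admissible scaling factors; taking $\lambda=p$ moves $p$ to $1$. Imposing $\phi(1)=1$, i.e.\ $\frac{a_1+1}{b_2}=1$, then forces $b_2=a_1+1$ and yields the stated form $\phi(z)=\frac{a_1z+1}{(a_1+1)z^2}$. Nondegeneracy ($\mathrm{Res}(F,G)\neq0$) holds whenever $a_1\neq-1$, and the five vertices below are distinct for all but finitely many $a_1$, so that $G_\phi\supseteq{}$R2P2 generically. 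To read off the graph I would solve the remaining preimage equations: $\phi(z)=0$ gives $a_1z+1=0$, so the non-cyclic preimage of $0$ is $-1/a_1$, while $\phi(z)=1$ gives $(a_1+1)z^2-a_1z-1=(z-1)\big((a_1+1)z+1\big)=0$, so apart from the fixed point itself the preimage of $1$ is $-1/(a_1+1)$. This recovers the five labelled vertices and their arrows.

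The step I expect to be the genuine obstacle is the uniqueness clause hidden in the words ``normal form'': that a single conjugacy class meets the family in only one parameter value. Any conjugacy between two parameter values $a_1,a_1'$ must carry critical point to critical point and critical value to critical value, hence fix $0$ and $\infty$ and be a scaling $z\mapsto\lambda z$; combined with the transformation law above, $\lambda$ must satisfy $b_2\lambda^3-a_1\lambda-1=0$, i.e.\ $\lambda$ must itself be a fixed point of the map. Thus the normalized representatives of one class are in bijection with its rational fixed points, and $\lambda=1$ (so $a_1=a_1'$) is forced precisely when $1$ is the only rational fixed point. Since $b_2z^3-a_1z-1=(z-1)\big((a_1+1)z^2+(a_1+1)z+1\big)$ and the quadratic factor has discriminant $(a_1+1)(a_1-3)$, the subtle point is that this discriminant is a rational square for an infinite set of $a_1$; on that locus the map carries three rational fixed points and realizes a graph strictly larger than R2P2. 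I would therefore phrase uniqueness for the structure with a \emph{marked} fixed point (equivalently, for classes realizing R2P2 rather than merely admitting it), which makes the normalization of $0,\infty,1$ canonical and the parameter $a_1$ genuinely unique, matching the intended reading of the normal-form definition.
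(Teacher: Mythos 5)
Your proof is correct and follows essentially the same route as the paper's one-line argument: start from Lemma~\ref{lem:cyc2}, use the residual scaling freedom to move a rational fixed point to $1$ (forcing $b_2=a_1+1$), and solve $\phi(\alpha)=1$ to find the second preimage $-1/(a_1+1)$. Your closing caveat about uniqueness---that the parameter $a_1$ is only canonical once the fixed point is \emph{marked}, since a conjugacy class with three rational fixed points (one admitting graph R2P7) meets the family at three parameter values---is a genuine subtlety that the paper's terse proof passes over, and your resolution (reading the normal form as parametrizing classes with a marked fixed point) is the correct one.
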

\begin{proof}
This is proved by using Lemma~\ref{lem:cyc2}, replacing the fixed point with $1$ and finding its other preimage by solving $\phi(\alpha)=1$. 
\end{proof}

\begin{prop} \label{prop:R2P3}
A normal form of a quadratic map admitting graph R2P3 in Table~\ref{table:r2} is
\begin{equation*}
\phi(z)=-\frac{a_1z+1}{a_1(a_1+1)z^2}.
\end{equation*}
\end{prop}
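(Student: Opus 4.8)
The plan is to obtain R2P3 from the three-vertex base graph of Lemma~\ref{lem:cyc2} by imposing a single extra rationality condition and then normalizing away the one remaining conjugation freedom. The graph R2P3 is the one in which the off-cycle preimage $-1/a_1$ of the critical point $0$ itself acquires two $\QQ$-rational preimages; equivalently, it is the graph produced by applying recursion step~2 in the definition of $\mathcal{G}$ to the vertex $-1/a_1$ of the base graph, giving the five-vertex configuration with critical $2$-cycle $0\leftrightarrow\infty$, the tail vertex $-1/a_1\to 0$, and two preimages of $-1/a_1$.

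First I would start from $\phi(z)=\frac{a_1z+1}{b_2z^2}$ as in Lemma~\ref{lem:cyc2}, so that $0\leftrightarrow\infty$ is the critical $2$-cycle and $-1/a_1$ is the second preimage of $0$. Since this presentation fixes only the images of $0$ and $\infty$, the scalings $z\mapsto cz$ that fix the critical cycle remain available, and I would use this single degree of freedom to send one of the two preimages of $-1/a_1$ to the point $1$. Imposing $\phi(1)=-1/a_1$ and using $\phi(1)=\frac{a_1+1}{b_2}$ then forces $b_2=-a_1(a_1+1)$, which is exactly the asserted form.

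To confirm that this genuinely is a normal form for R2P3, and not for a smaller, larger, or degenerate graph, I would solve $\phi(\alpha)=-1/a_1$; this reduces to $(a_1+1)\alpha^2-a_1\alpha-1=0$, which factors as $((a_1+1)\alpha+1)(\alpha-1)=0$. Thus the second preimage of $-1/a_1$ is the rational point $-1/(a_1+1)$, so the full five-vertex graph R2P3 really does appear. The parameter values $a_1\in\{0,-1\}$, where $\phi$ degenerates, together with the finitely many $a_1$ for which the five marked points collide or acquire spurious rational preimages, account for the ``all but finitely many'' clause in the definition of a normal form. In particular I would check that the fixed-point equation $a_1(a_1+1)z^3+a_1z+1=0$ has no rational root for generic $a_1$, so that R2P3 carries no fixed point and is thereby distinguished from R2P2.

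The algebra above is routine; the one point requiring care is the uniqueness half of the normal-form claim. Because the two preimages $1$ and $-1/(a_1+1)$ of $-1/a_1$ occupy symmetric positions in R2P3, the choice of which one to place at $1$ a priori yields two parameter values for a single conjugacy class, so I expect the only real obstacle to be the bookkeeping of this residual symmetry. The resolution is to observe that conjugating by the scaling $z\mapsto -\tfrac{1}{a_1+1}z$, which fixes the critical cycle and carries the other preimage $-1/(a_1+1)$ to the coordinate $1$, returns a map again in the asserted form, now with parameter sent by the involution $a_1\mapsto -a_1/(a_1+1)$. Recognizing this map of the parameter line as an involution shows the two choices are genuinely conjugate, and restricting to a fundamental domain for it recovers the uniqueness of the representative.
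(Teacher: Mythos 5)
Your proposal is correct and takes essentially the same route as the paper: the paper's entire proof consists of starting from the form of Lemma~\ref{lem:cyc2}, imposing $\phi(1)=-1/a_1$, and solving the resulting condition $a_1^2+a_1+b_2=0$ for $b_2$, which is exactly your central step. Your additional verifications---that the second preimage $-1/(a_1+1)$ is automatically rational (so the full five-vertex graph appears) and that the residual symmetry of R2P3 acts on the parameter by the involution $a_1\mapsto -a_1/(a_1+1)$, whose effect must be quotiented out for the uniqueness clause of the paper's normal-form definition---are accurate and address points the paper leaves implicit.
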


\begin{proof}
We start with the normal form~\eqref{eq:cyc2} in Lemma~\ref{lem:cyc2} and add the constraint $\phi(1)=-1/a_1$. From this we get the condition $a_1^2+a_1+b_2=0$, or $b_2=-(a_1+1)a_1$.
\end{proof}

\begin{prop}\label{prop:R2P4}
There is a unique conjugacy class of quadratic maps defined over $\mathbb{Q}$ with a $\mathbb{Q}$-rational periodic critical point of period $2$ that realizes graph R2P4.
\end{prop}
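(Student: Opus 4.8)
The plan is to use that, in this setting, a conjugacy class is a single piece of data. By Lemma~\ref{lem:cyc2-unique} every quadratic map with a periodic critical point of period $2$ is conjugate to a unique $\phi(z)=(-z+1)/(b_2z^2)$ with $b_2\neq 0$, so the conjugacy classes are parametrized faithfully by the value of $b_2$. First I would read off from Table~\ref{table:r2} the precise shape of R2P4 and translate the assertion ``$\phi$ admits R2P4'' into conditions on $b_2$. Each vertex of R2P4 that is forced to have indegree $2$ contributes the requirement that the quadratic $\phi(z)=c$ split over $\QQ$, for the relevant already-constructed $\QQ$-rational vertex $c$; that is, that a certain discriminant be a square, which I record by introducing an auxiliary square variable. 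Any fixed point or extra cycle prescribed by R2P4 beyond the critical $2$-cycle instead contributes an honest polynomial relation, such as a rational root of $\Phi_1^*$. Assembling these conditions produces the affine curve $C(\text{R2P4})$ lying over the $b_2$-line, whose $\QQ$-points are exactly the classes admitting R2P4.

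Second, I would determine $C(\text{R2P4})(\QQ)$. Unlike the graphs R2P5, R2P6, R2P7, whose parametrizing curves have genus $0$ and hence infinitely many rational points by Corollary~\ref{cor:genus0}, the curve $C(\text{R2P4})$ must have only finitely many $\QQ$-points: a genus-$0$ parametrization would produce an infinite family of classes admitting R2P4, a generic member of which acquires no further rational preperiodic points and would therefore realize R2P4 exactly, contradicting the asserted uniqueness. I thus expect $C(\text{R2P4})$ to have positive genus, most plausibly to become birational to an elliptic curve once the auxiliary square variables are eliminated. I would put it in Weierstrass form, compute the rank and torsion of its Mordell--Weil group over $\QQ$, and, assuming rank $0$, list the finitely many rational points explicitly.

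Finally I would prune this finite list. A rational point is discarded if it lies over $b_2=0$ or over a value with $\operatorname{Res}(F,G)=0$ (so that $\phi$ fails to be a genuine quadratic map), if it is a point at infinity of the chosen model, or if at it two of the marked vertices collide, so that the graph actually carried is a proper subgraph of R2P4. Exactly one value of $b_2$ should survive, and since the normal form of Lemma~\ref{lem:cyc2-unique} is faithful this is a single conjugacy class; for the surviving class I would then verify directly (e.g.\ via the algorithm of Hutz, or by locating it in the Hasse diagram of Proposition~\ref{prop:hasse}) that it has no additional $\QQ$-rational preperiodic points, so that it realizes R2P4 exactly rather than merely admitting it. The hard part is the Diophantine step: correctly identifying $C(\text{R2P4})$, carrying out the Mordell--Weil computation and in particular certifying rank $0$, and then the delicate bookkeeping of ruling out collisions of marked points, so that each surviving rational point genuinely corresponds to a map whose preperiodicity graph is R2P4 and not something smaller.
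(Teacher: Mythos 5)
Your strategy fails at its Diophantine core: the curve you propose to construct does not have positive genus, and its set of rational points is not finite. In the normal form \eqref{eq:cyc2-1} of Lemma~\ref{lem:cyc2-1} (which already carries the critical $2$-cycle, its extra preimage, the fixed point $1$ and its extra preimage), ``admits R2P4'' amounts to requiring one additional rational fixed point $w$; the extra rational preimages of the two fixed points come for free, because a quadratic equation over $\mathbb{Q}$ with one rational root has both roots rational, and for a non-PCF map no fixed point can be a critical value. So the curve of classes admitting R2P4 is the plane curve $a_1w^2+w^2+a_1w+w+1=0$, which is exactly the genus-$0$ normal form that the paper constructs in Proposition~\ref{prop:R2P7}: it has infinitely many rational points, and infinitely many conjugacy classes admit R2P4. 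There is no elliptic curve, no rank-$0$ Mordell--Weil group, and no finite list to prune. The heuristic by which you predicted positive genus (besides being circular, since it invokes the uniqueness you are trying to prove) is precisely what breaks: by Remark~\ref{rem:2-fixed}, a generic member of this family \emph{does} acquire further rational preperiodic points, namely a third rational fixed point (a cubic over $\mathbb{Q}$ with two rational roots and distinct roots has all three roots rational), together with its automatically rational second preimage; so the generic member realizes R2P7 rather than R2P4, and genericity gives you nothing.

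The observation you are missing is Remark~\ref{rem:2-fixed}, and it is the entire content of the paper's proof, which is linear rather than elliptic. If a class realizes R2P4, its graph has exactly two rational fixed points, so $\Phi_1^*$ must have a (necessarily rational) multiple root. Conjugating so that this double root is the marked fixed point $1$ of the normal form \eqref{eq:cyc2-1}, the condition $(\Phi_1^*)'(1)=0$ reads $2a_1+3=0$, i.e.\ $a_1=-\tfrac{3}{2}$: a single linear equation with a unique solution. Hence at most one conjugacy class can realize R2P4, and the map $\frac{3z-2}{z^2}$ of Table~\ref{table:r2} shows that exactly one does. If you want to retain a geometric picture, the classes realizing R2P4 are not the rational points of a positive-genus curve; they are the finitely many points of the genus-$0$ curve of Proposition~\ref{prop:R2P7} at which the discriminant of $\Phi_1^*$ vanishes.
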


\begin{proof}
We mentioned in Remark~\ref{rem:2-fixed} that $\Phi_1^*$ must have a double root. We start with the normal form \eqref{eq:cyc2-1} in Lemma~\ref{lem:cyc2-1}. We add the constraint that $1$ is a double root of $\Phi_1^*$. This is done by constraining $1$ to be a root of the derivative of $\Phi_1^*$. This condition gives $ 2a_1+3 = 0 $ or $a_1=-\frac{3}{2}$. This provides the example in Table~\ref{table:r2} for graph R2P4.
\end{proof}

\begin{prop} \label{prop:R2P5}
A normal form of a quadratic map admitting graph R2P5 in Table~\ref{table:r2} is
\begin{equation*}
\phi(z)=-\frac{a_1(a_1z+1)}{(a_1^3+2a_1^2+2a_1+1)z^2}.
\end{equation*}
\end{prop}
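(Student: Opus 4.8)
\emph{Plan.} Graph R2P5 is the seven-vertex tree obtained from R2P3 by extending the preimage tree of the critical point by one more level: the critical cycle $0\rightleftarrows\infty$ carries the non-critical preimage $-1/a_1$ of $0$, that vertex has two preimages, and exactly one of those two depth-$2$ vertices again carries two preimages, so the deepest vertices sit at tree-distance $3$ from the cycle. Since R2P5 contains R2P3, I would begin from the normal form \eqref{eq:cyc2} of Lemma~\ref{lem:cyc2}, $\phi(z)=(a_1z+1)/(b_2z^2)$, and use the one remaining scaling freedom $z\mapsto\lambda z$ (which preserves \eqref{eq:cyc2}) to place one of these deepest vertices at $1$. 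Where Proposition~\ref{prop:R2P3} imposed $\phi(1)=-1/a_1$ (putting $1$ at depth $2$), here I impose instead
\[
\phi^2(1)=-\frac{1}{a_1},
\]
which expresses that $1$ is a preimage of a preimage of $-1/a_1$; as $\phi^{-1}(0)=\{-1/a_1,\infty\}$ this is the same as $\phi^3(1)=0$ together with $\phi^2(1)\neq\infty$.

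\emph{Derivation.} Writing $A=\phi(1)=(a_1+1)/b_2$ and imposing $\phi(A)=-1/a_1$ gives $b_2A^2+a_1^2A+a_1=0$; substituting $A=(a_1+1)/b_2$ and clearing denominators collapses this to
\[
a_1b_2=-(a_1+1)^2-a_1^2(a_1+1)=-(a_1+1)(a_1^2+a_1+1),
\]
so $b_2=-(a_1+1)(a_1^2+a_1+1)/a_1$. Plugging this back into \eqref{eq:cyc2} and using $(a_1+1)(a_1^2+a_1+1)=a_1^3+2a_1^2+2a_1+1$ yields exactly $\phi(z)=-\frac{a_1(a_1z+1)}{(a_1^3+2a_1^2+2a_1+1)z^2}$.

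\emph{Main obstacle.} The step I expect to require genuine care is verifying that for all but finitely many $a_1$ this family realizes \emph{precisely} R2P5 --- the full seven-vertex graph, and nothing larger. The clean tool is that rational preimages come in rational pairs: for $c\in\mathbb{Q}\setminus\{0,\infty\}$ the fiber $\phi^{-1}(c)$ is the root set of the \emph{rational} quadratic $cb_2z^2-a_1z-1=0$, so if one preimage is rational the other is too. Hence the single constraint, which makes $A=\phi(1)$ a rational preimage of $-1/a_1$ and $1$ a rational preimage of $A$, automatically forces the sibling of $A$ and the sibling of $1$ to be rational; with $0,\infty,-1/a_1$ these are the seven vertices of R2P5. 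That the graph is no larger I would check by confirming the three leaves gain no further rational preimages for generic $a_1$: the depth-$2$ leaf $a_1/(a_1+1)$ has fiber $(a_1^2+a_1+1)z^2+a_1z+1=0$ with discriminant $-3a_1^2-4a_1-4<0$, so it has no real (hence no rational) preimages at all, while the two depth-$3$ leaves have fibers whose discriminants are non-squares for all but finitely many $a_1$; one also excludes a stray rational fixed point (a root of $\Phi_1^*$) for generic $a_1$. Lastly, just as for R2P3 the two deepest leaves are swapped by a graph automorphism, so placing either at $1$ is legitimate; computing the induced involution on the parameter as $a_1\mapsto -a_1/(a_1+1)$ and restricting to a fundamental domain for it secures the uniqueness part of the normal-form claim.
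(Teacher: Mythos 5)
Your derivation is exactly the paper's proof: start from the normal form \eqref{eq:cyc2} of Lemma~\ref{lem:cyc2}, impose $\phi^2(1)=-1/a_1$, and eliminate $b_2$ from the resulting condition $a_1^3+2a_1^2+b_2a_1+2a_1+1=0$, which yields the stated form. Your additional material (rational preimages coming in pairs, the everywhere-negative discriminant $-3a_1^2-4a_1-4$ of the depth-$2$ fiber, and the parameter involution $a_1\mapsto -a_1/(a_1+1)$ with a fundamental domain for uniqueness) is correct and in fact more careful than the paper, whose proof stops at the elimination of $b_2$.
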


\begin{proof}
We start with the normal form~\eqref{eq:cyc2} in Lemma~\ref{lem:cyc2} and add the constraint $\phi^2(1)=-1/a_1$. From this we get the condition $a_1^3+2a_1^2+b_2a_1+2a_1+1=0$, or $b_2=-\frac{a_1^3+2a_1^2+2a_1+1}{a_1}$.
\end{proof}

\begin{prop} \label{prop:R2P6}
A normal form of a quadratic map admitting graph R2P6 in Table~\ref{table:r2} is
\begin{equation*}
\phi(z)=\frac{(w^2+1)z-w}{(w^2-w+1)z^2}.
\end{equation*}
\end{prop}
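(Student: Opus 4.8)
The plan is to realize R2P6 as an extension of the fixed-point graph R2P2, so I would start from the normal form \eqref{eq:cyc2-1} of Lemma~\ref{lem:cyc2-1},
\[
\phi(z) = \frac{a_1 z + 1}{(a_1+1)z^2},
\]
in which $0 \leftrightarrow \infty$ is the critical $2$-cycle, $1$ is a fixed point, and $q := -1/(a_1+1)$ is its preimage. Comparing with the graph R2P6 in Table~\ref{table:r2}, the one feature absent from R2P2 is that $q$ should itself carry two $\QQ$-rational preimages; hence the entire task is to decide for which $a_1$ the fibre $\phi^{-1}(q)$ consists of two rational points, and then to parametrize that locus.

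First I would compute $\phi^{-1}(q)$ by solving $\phi(z) = -1/(a_1+1)$. Clearing denominators makes this collapse to the monic quadratic $z^2 + a_1 z + 1 = 0$, so that both preimages lie in $\QQ$ exactly when the discriminant $a_1^2 - 4$ is a square in $\QQ$. Next I would view the condition $a_1^2 - 4 = s^2$ as a smooth conic carrying an obvious rational point (e.g.\ $(a_1,s)=(2,0)$), hence of genus $0$, and rationally parametrize it. The substitution $a_1 = -(w + w^{-1}) = -(w^2+1)/w$ accomplishes this, since then $a_1^2 - 4 = (w - w^{-1})^2$ is automatically a square and the two preimages of $q$ come out as the rational points $z = w$ and $z = 1/w$.

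Finally I would substitute $a_1 = -(w^2+1)/w$ into \eqref{eq:cyc2-1} and multiply numerator and denominator by $-w$, obtaining
\[
\phi(z) = \frac{(w^2+1)z - w}{(w^2 - w + 1)z^2}
\]
as claimed. To confirm this is a valid normal form I would check directly that $\phi(1) = 1$, that $w$ and $1/w$ are the two preimages of $q = w/(w^2-w+1)$, and that the critical leaf $w/(w^2+1)$ picks up no further rational preimages for generic $w$, so that $\phi$ admits exactly R2P6 for all but finitely many $w$; the faithful modulus is $a_1$, with respect to which the parametrization is one-to-one up to the evident involution $w \mapsto 1/w$ that merely swaps the two leaves above $q$.

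The only genuine decision is the first one, namely reading off from the graph that the extra edges of R2P6 sit above the fixed-point branch rather than above the critical leaf; once that is fixed, the proposition reduces to the square condition $a_1^2 - 4 = \square$. The pleasant point, in contrast to the period-$3$ and period-$4$ situations discussed in the introduction, is that this locus is a rational conic, so no positive-genus obstruction appears and the rational parametrization by $w$ exists.
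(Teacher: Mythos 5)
Your proposal is correct and takes essentially the same approach as the paper: the paper also starts from the normal form \eqref{eq:cyc2-1} and imposes $\phi(w)=-1/(a_1+1)$ for a marked rational preimage $w$, which gives $w^2+a_1w+1=0$, hence $a_1=-(w^2+1)/w$, exactly the substitution you arrive at. Your detour through the discriminant condition $a_1^2-4$ being a square and the conic parametrization is equivalent to the paper's direct marking of one preimage (rationality of one root of $z^2+a_1z+1$ forces the other, since the roots multiply to $1$), so the two arguments coincide in substance.
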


\begin{proof}
We start with the normal form~\eqref{eq:cyc2-1} in Lemma~\ref{lem:cyc2-1} and add the constraint $\phi(w)=-1/(a_1+1)$; this makes $w$ a preimage of $-1/(a_1+1)$, which is in turn a preimage of the fixed point $1$. From this we get the condition $a_1w+1+w^2$, or $a_1=-\frac{1+w^2}{w}$. Substituting this back into equation \eqref{eq:cyc2-1} gives us the required result.
\end{proof}

\begin{prop} \label{prop:R2P7}
A normal form of a quadratic map admitting graph R2P7 in Table~\ref{table:r2} is
\begin{equation*}
\phi(z)=\frac{(w^2+w+1)z-w(w+1)}{z^2}.
\end{equation*}
\end{prop}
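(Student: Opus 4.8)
The plan is to follow the same template as Propositions~\ref{prop:R2P5} and~\ref{prop:R2P6}: begin with a normal form that already records part of the graph R2P7, impose a single algebraic constraint that forces the remaining feature of the graph, solve it for the free coefficient, and substitute back. Inspecting the target formula $\phi(z)=\frac{(w^2+w+1)z-w(w+1)}{z^2}$ shows that $\phi(1)=(w^2+w+1)-w(w+1)=1$, so $1$ is a fixed point; this suggests starting from the normal form~\eqref{eq:cyc2-1} of Lemma~\ref{lem:cyc2-1}, namely $\phi(z)=\frac{a_1z+1}{(a_1+1)z^2}$, which already carries the period-$2$ critical cycle $0\rightleftarrows\infty$ together with the fixed point at $1$.

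The distinguishing feature of R2P7 is that it contains three rational fixed points (each carrying its own rational preimage), rather than the single one present in graph R2P2. Accordingly, I would introduce the parameter $w$ as a second fixed point and impose the constraint $\phi(w)=w$. Clearing denominators gives $a_1w+1=(a_1+1)w^3$; since $w=1$ is the already-marked fixed point, I can divide the resulting cubic by $(w-1)$ and solve the linear remainder for $a_1$, obtaining $a_1=-\frac{w^2+w+1}{w(w+1)}$ (valid for $w\neq 0,1,-1$). Substituting this value back into~\eqref{eq:cyc2-1}, together with the simplification $a_1+1=-\frac{1}{w(w+1)}$, clears the parameter out of the denominator and yields exactly $\phi(z)=\frac{(w^2+w+1)z-w(w+1)}{z^2}$.

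It then remains to confirm that this one-parameter family genuinely admits the graph R2P7 and not some smaller graph. I would factor the first dynatomic polynomial as $\Phi_1^*(z)=(z-1)(z^2+z-w(w+1))$ and note that the quadratic factor has discriminant $1+4w(w+1)=(2w+1)^2$, a perfect square; hence the remaining two fixed points are the rational points $w$ and $-(w+1)$ (this is precisely the mechanism recorded in Remark~\ref{rem:2-fixed}, where two rational fixed points force the third). A direct preimage computation then shows each of $1,w,-(w+1)$ has a second rational preimage and that $0$ has its second preimage at $\frac{w(w+1)}{w^2+w+1}$, producing the nine vertices of R2P7.

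The computations here are entirely routine; the only genuine care needed is the bookkeeping of the finitely many excluded values of $w$ (those in $\{0,1,-1,-2,-\tfrac12\}$, which either degenerate $\phi$ or cause two of the fixed points to collide) so that $w\mapsto\phi$ is a bona fide normal form in the sense defined before Lemma~\ref{lem:cyc2}. I do not anticipate a serious obstacle: once the constraint $\phi(w)=w$ is identified as the correct one, everything reduces to the same elementary manipulation used in the preceding propositions, and the perfect-square discriminant is exactly what guarantees that the third fixed point, and hence the full symmetry of the graph, is automatically rational.
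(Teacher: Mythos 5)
Your proposal is correct and follows essentially the same route as the paper: you start from the normal form of Lemma~\ref{lem:cyc2-1}, impose that $w$ be a second fixed point (your equation $a_1w+1=(a_1+1)w^3$, after dividing out the known root, is exactly the paper's condition $a_1w^2+w^2+a_1w+w+1=0$), and solve for $a_1=-\frac{w^2+w+1}{w(w+1)}$. The only difference is in verifying that the family admits R2P7: the paper argues indirectly via the uniqueness statement of Proposition~\ref{prop:R2P4} combined with Remark~\ref{rem:2-fixed}, whereas you exhibit the third fixed point $-(w+1)$ explicitly through the perfect-square discriminant $(2w+1)^2$ --- an equally valid and slightly more explicit check, with the same excluded parameter values.
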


\begin{proof}
We construct a normal form for graph~R2P4; this is the normal form in the statement of the Proposition. By Proposition~\ref{prop:R2P4} there is a unique parameter $w$ for which this normal form realizes graph~R2P4. By Remark~\ref{rem:2-fixed} any other parameter $w$ for which the normal form does not degenerate, must admit graph~R2P7. We start with the normal form~\eqref{eq:cyc2-1} in Lemma~\ref{lem:cyc2-1}. We know the first dynatomic polynomial $\Phi_1^*$ is divisible by $(z-1)$, since $1$ is a fixed point. To constrain the existence of another fixed point, we add the condition that $w$ is a root of the polynomial $\frac{\Phi_1^*(z)}{z-1}$. This gives us $a_1w^2+w^2+a_1w+w+1=0$ or $a_1=-{\frac {{w}^{2}+w+1}{w \left( w + 1\right) }}$, and from this we obtain the required normal form.
\end{proof}

From Lemmas~\ref{lem:cyc2-unique} and~\ref{lem:cyc2-1} and Propositions~\ref{prop:R2P3}, \ref{prop:R2P4}, \ref{prop:R2P5}, \ref{prop:R2P6} and \ref{prop:R2P7} we immediately get the following corollary.
\begin{cor} \label{cor:genus0}
The curves parametrizing (conjugacy classes of) quadratic maps admitting the graphs in Table~\ref{table:r2} are rational over $\mathbb{Q}$, i.e.\ of genus $0$.
\end{cor}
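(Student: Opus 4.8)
The plan is to exploit the fact that each graph in Table~\ref{table:r2} has been equipped with an explicit \emph{normal form} in the cited results, and to recall that, by the definition given just before Lemma~\ref{lem:cyc2}, a normal form for a graph $G$ is nothing other than a birational parametrization of the parameter curve $C(G)$ by the affine line. First I would observe that each of the normal forms in Lemmas~\ref{lem:cyc2-unique} and~\ref{lem:cyc2-1} and Propositions~\ref{prop:R2P3}, \ref{prop:R2P5}, \ref{prop:R2P6} and~\ref{prop:R2P7} depends on a \emph{single} free parameter ($b_2$, $a_1$, or $w$) ranging over all but finitely many values of $\mathbb{A}^1(\mathbb{Q})$. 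The two defining properties of a normal form --- that $\phi_t$ admits $G$ for all but finitely many $t$, and that each conjugacy class admitting $G$ corresponds to a \emph{unique} value of $t$ --- say precisely that the assignment $t\mapsto[\phi_t]$, where $[\phi_t]$ denotes the $\PGL_2(\mathbb{Q})$-conjugacy class, is a birational map from $\mathbb{A}^1$ onto $C(G)$.

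Next I would conclude that $C(G)$ is rational over $\mathbb{Q}$. Being birational over $\mathbb{Q}$ to the open subset of $\mathbb{A}^1\subset\mathbb{P}^1$ carved out by the finitely many excluded parameter values, $C(G)$ has geometric genus $0$; moreover it carries a $\mathbb{Q}$-rational point, namely the image of any rational parameter value lying outside the excluded finite set. This disposes of the graphs R2P1, R2P2, R2P3, R2P5, R2P6 and R2P7 in a single uniform stroke.

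The only case requiring separate treatment is R2P4: by Proposition~\ref{prop:R2P4} there is exactly one conjugacy class of quadratic maps realizing this graph, so the space parametrizing such classes is a single $\mathbb{Q}$-rational point, which is trivially rational (and may be regarded as a degenerate genus~$0$ datum).

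I do not anticipate any substantive obstacle here --- the statement itself flags the corollary as immediate. The one point that genuinely deserves care is to justify that each of the listed parametrizations is \emph{birational}, i.e.\ generically one-to-one onto $C(G)$, rather than merely dominant: a finite-to-one map from $\mathbb{A}^1$ need not force genus $0$ of the image. This is exactly what the uniqueness clause in the definition of a normal form supplies, since it guarantees that distinct admissible parameter values yield distinct conjugacy classes, so the map $t\mapsto[\phi_t]$ has degree one.
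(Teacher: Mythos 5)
Your proposal is correct and follows essentially the same route as the paper, which deduces the corollary immediately from the one-parameter normal forms of Lemmas~\ref{lem:cyc2-unique} and~\ref{lem:cyc2-1} and Propositions~\ref{prop:R2P3}--\ref{prop:R2P7}; your explicit observation that the uniqueness clause in the definition of a normal form makes $t\mapsto[\phi_t]$ birational (not merely dominant and finite-to-one) is exactly the content hidden in the paper's ``immediately.'' Your separate treatment of R2P4 as a single conjugacy class likewise corresponds to the paper's citation of Proposition~\ref{prop:R2P4} for that graph.
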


%

\section{Inadmissibility results} \label{sec:inadmissible}

\begin{prop} \label{prop:inadmissible}
The graphs N2E1--N2E5 in Table~\ref{table:n2e} are inadmissible by non-PCF quadratic maps defined over $\mathbb{Q}$ with a $\mathbb{Q}$-rational periodic critical point of period $2$ defined over $\mathbb{Q}$. 
\end{prop}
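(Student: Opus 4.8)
The plan is to handle the five graphs uniformly by the curve method announced in the introduction: to each graph $G\in\{\text{N2E1},\dots,\text{N2E5}\}$ I attach an affine curve $C(G)$ whose $\mathbb{Q}$-rational points (away from a degenerate locus) are in bijection with $\PGL_2(\mathbb{Q})$-conjugacy classes of quadratic maps with a $\mathbb{Q}$-rational periodic critical point of period $2$ that admit $G$, and then I show $C(G)(\mathbb{Q})$ contains only degenerate points. Since in the preperiodicity graph every vertex carries a $\mathbb{Q}$-coordinate and every arrow is the relation $\phi(\text{source})=\text{target}$, producing $C(G)$ is just a matter of writing down the incidence equations and eliminating.

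Concretely, I would start from the normal form $\phi(z)=(a_1z+1)/(b_2z^2)$ of Lemma~\ref{lem:cyc2}, which already encodes the critical $2$-cycle $0\rightleftarrows\infty$ together with the second preimage $-1/a_1$ of $0$. Each of N2E1--N2E5 is one of the realizable graphs of Table~\ref{table:r2} with a single extra vertex or incidence (these are the minimal inadmissible graphs reached by going up the Hasse diagram). For each new $\mathbb{Q}$-rational vertex I introduce a coordinate indeterminate, and for each new arrow a polynomial relation; I also impose the nondegeneracy conditions $Res(F,G)\neq 0$ and pairwise distinctness of the marked points, so that $\phi$ is genuinely of degree $2$ and really admits $G$ rather than a proper subgraph. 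Eliminating the auxiliary coordinates by resultants yields a plane model of $C(G)$, and the residual scaling freedom $(a_1,b_2)\mapsto(\lambda a_1,\lambda^3 b_2)$ (equivalently, fixing the third marked point to be $1$ as in Lemma~\ref{lem:cyc2-unique}, or passing to the invariant $a_1^3/b_2$) cuts the parameter count down to a curve in two variables. Whenever a graph contains a fixed point I would instead begin from the form~\eqref{eq:cyc2-1} of Lemma~\ref{lem:cyc2-1}, and I would use Lemma~\ref{lem:periodic-limits} and Remark~\ref{rem:2-fixed} to control how many fixed points can appear.

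Having the curves in hand, I would rule out rational points according to genus. For the $C(G)$ of genus $0$ I expect a smooth conic, and I would exhibit a local obstruction --- no points over $\mathbb{R}$, or none over $\mathbb{Q}_p$ for a small prime $p$ --- so that Hasse--Minkowski forces $C(G)(\mathbb{Q})=\emptyset$. For those of genus $1$ I would compute a Weierstrass model, identify the Jacobian, verify by $2$-descent (or a Mordell--Weil computation) that the rank is $0$, and then list the finitely many rational points and check that each one lies in the degenerate locus (a collision of marked points, or $Res(F,G)=0$), hence yields no map admitting $G$. Should any $C(G)$ have genus $\geq 2$, I would try to map it to a rank-$0$ elliptic quotient or else apply Chabauty--Coleman; given that all the realizable graphs already have genus $0$ by Corollary~\ref{cor:genus0}, I anticipate the minimal inadmissible curves stay in genus $0$ or $1$.

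The main obstacle is the genus-$1$ cases: not the rank computation per se, but the bookkeeping needed to confirm that \emph{every} rational point on $C(G)$ is degenerate rather than a genuine counterexample. This means tracking the distinctness and $Res(F,G)\neq 0$ conditions precisely enough that the only solutions are the expected collisions, so that ``$C(G)$ has rational points'' is consistent with ``$G$ is inadmissible.'' The complementary subtlety is on the modeling side: setting up the incidence equations so that $C(G)$ parametrizes exactly the maps admitting $G$, neither over- nor under-counting, so that emptiness of $C(G)(\mathbb{Q})$ modulo degeneracies indeed certifies inadmissibility. The genus-$0$ cases should reduce to routine local computations once the conic is in normal form.
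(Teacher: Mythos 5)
Your proposal is correct and follows essentially the same route as the paper: the authors start from the normal forms of Lemmas~\ref{lem:cyc2} and~\ref{lem:cyc2-1}, impose the extra incidence relations for each graph, eliminate to get a plane curve, and then dispose of rational points exactly as you predict --- four of the curves (N2E1--N2E4) have genus $1$ and map to rank-$0$ elliptic curves (Cremona 11a3, 19a3, 19a3, 15a8) whose finitely many torsion points are all checked to be degenerate (at infinity, vanishing resultant, or collisions of marked points), while the fifth (N2E5) has genus $0$ and is birational to the conic $X^2+Y^2+Z^2=0$, which has no rational points, so only singular (degenerate) points survive. Even the bookkeeping you flag as the main obstacle --- tracking indeterminacy loci of the birational maps and certifying that every surviving rational point is degenerate --- is precisely what occupies the bulk of the paper's case-by-case verification.
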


\begin{proof} \quad
\begin{description}[leftmargin=0cm, labelindent=0cm]
\item[N2E1] We start with the normal form \eqref{eq:cyc2-1} in Lemma~\ref{lem:cyc2-1}, and add the constraint $\phi(z)=-\frac{1}{a_1}$. This gives the equation
\begin{equation} \label{eq:N2E1}
a_1^2z+a_1+z^2a_1+z^2=0.
\end{equation}
Each $\phi$ realizing graph~N2E1 provides two solutions on the affine curve defined by \eqref{eq:N2E1}, one for each preimage of $-\frac{1}{a_1}$.
A birational transformation defined by $$x=-\frac{1}{a_1}, \quad y=-1-\frac{1}{a_1z}$$ brings equation~\eqref{eq:N2E1} to the form
\begin{equation} \label{eq:N2E1-ell}
y^2 + y = x^3 - x^2.
\end{equation} 
This is elliptic curve 11a3 in Cremona's tables~\cite{article:cremona2006}, with rank $0$ and torsion subgroup $\mathbb{Z}/5\mathbb{Z}$ with rational points
$$\{\mathcal{O}, (0,0), (0,-1), (1,-1), (1,0)\}.$$ 
Since both equations \eqref{eq:N2E1} and \eqref{eq:N2E1-ell} correspond to smooth projective curves, the map between the two curves is in fact an isomorphism, and the five rational points of \eqref{eq:N2E1-ell} are mapped to the five unique solutions of \eqref{eq:N2E1}:
$$[0 : 0 : 1], [-1 : 1 : 1], [-1 : 1 : 0], [0 : 1 : 0], [1 : 0 : 0],$$
where the coordinates are $[X:Y:Z]$ with $a_1=X/Z$ and $z=Y/Z$. The first point has $z=0$, which is not a preimage of $-\frac{1}{a_1}$, the next two points correspond to degenerate maps, i.e.\ with resultant $0$, and the final two points are at infinity. The points at infinity are degenerate limit points of quadratic maps (when $Z$ tends to $0$ in the parameter $a_1=X/Z$ the normal form \eqref{eq:cyc2-1} degenerates, i.e.\ drops in degree).Thus we see that there are no quadratic maps defined over $\mathbb{Q}$ realizing graph~N2E1.
\begin{rem*}
In all the other cases, we will consider the normal forms from the previous section together with some other constraints,  as we did for N2E1. We will obtain affine curves whose projective closure admits some points at infinity. As in the previous case, the set of points at infinity can be verified to correspond to degenerate quadratic maps, i.e.\ of degree $0$ or $1$. 
\end{rem*}
\item[N2E2] We start with normal form \eqref{eq:cyc2} in Lemma~\ref{lem:cyc2}, and add the constraint $\phi^3(1)=-\frac{1}{a_1}$. This gives us the equation of our curve $C$:
\begin{tiny}
\begin{equation}
C: a_1^6+4a_1^5+7a_1^4+7a_1^3+2a_1^4b_2+4a_1^3b_2+2a_1^2b_2+4a_1^2+a_1+2a_1^2b_2^2+2a_1b_2^2+b_2^3=0.
\end{equation}
\end{tiny}
The projective closure $\bar{C}$ of $C$ is given by
\begin{tiny}
\begin{equation*}
\bar{C}: X^6 + 4X^5Z + 2X^4YZ + 7X^4Z^2 + 4X^3YZ^2 + 2X^2Y^2Z^2 + 7X^3Z^3 + 2X^2YZ^3 + 2XY^2Z^3 + Y^3Z^3 + 4X^2Z^4 + XZ^5=0,
\end{equation*}
\end{tiny}
with coordinates $[X:Y:Z]$ where $a_1=X/Z$ and $b_2=Y/Z$.

Using \magma~\cite{article:magma} we find that this is a genus $1$ curve, and a quick search provides us with three rational points:
$$ [0 : 1 : 0], [0 : 0 : 1], [-1 : 0 : 1].$$
The point $[0:0:1]$ is nonsingular, so we may perform a Weierstrass transformation to the elliptic curve obtained by choosing $[0:0:1]$ as the identity element of the curve $\bar{C}$.
Using \magma we find that the minimal model of this elliptic curve is
\begin{equation} \label{eq:N2E2-ell}
E: y^2 + y = x^3 + x^2 + x,
\end{equation}
with projective closure 
\begin{equation*}
\bar{E}: -X^3 - X^2Z + Y^2Z - XZ^2 + YZ^2=0
\end{equation*}
in the projective plane $\mathbb{P}^2_{[X:Y:Z]}$.

The birational map from $\bar{C}$ to $\bar{E}$ is defined by:
\begin{tiny}
\begin{equation*}
\begin{split}
\varphi = [&X^4Y + 3X^3YZ + 2X^2Y^2Z + 4X^2YZ^2 + 2XY^2Z^2 + Y^3Z^2 + 3XYZ^3 + YZ^4: \\
&X^5 + 4X^4Z + 2X^3YZ + 7X^3Z^2 + 4X^2YZ^2 + 2XY^2Z^2 - Y^3Z^2 + 7X^2Z^3 + 2XYZ^3 + 2Y^2Z^3 + 4XZ^4 + Z^5:\\
&Y^3Z^2].
\end{split}
\end{equation*}
\end{tiny}
The inverse is given by:
\begin{tiny}
\begin{equation*}
\begin{split}
\psi = [&X^3Y^2 + X^3YZ + X^2Y^2Z - Y^4Z + X^2YZ^2 + XY^2Z^2 - 2Y^3Z^2 + XYZ^3 - Y^2Z^3 - YZ^4 - 
    Z^5: \\
&XYZ^3 + YZ^4: \\
&Y^2Z^3 + 2YZ^4 + Z^5].
\end{split}
\end{equation*}
\end{tiny}
The elliptic curve $E$ is Cremona 19a3, with Mordell--Weil rank $0$, and torsion subgroup $\mathbb{Z}/3\mathbb{Z}$, easily found to consist of the three rational points
$$[0 : 0 : 1], [0 : -1 : 1], \mathcal{O}=[0:1:0].$$
One can check that outside the sets
\begin{tiny}
\begin{equation*}
\begin{split}
S &= \{[0 : 0 : 1], [-1 : 0 : 1],  [0 : 1 : 0], [-\frac{1}{2}\pm\frac{1}{2}i\sqrt{3} : 0 : 1]\} \subset C(\mathbb{C}), \\
T &= \{[0:1:0],[0:-1:1],[0:0:1], [-\frac{1}{2}\pm\frac{1}{2}i\sqrt{3}: 0: 1],[-\frac{1}{2}\pm\frac{1}{2}i\sqrt{3}:-1:1],[-1:-\frac{1}{2}\pm\frac{1}{2}i\sqrt{3}:1]\} \subset E(\mathbb{C})
\end{split}
\end{equation*}
\end{tiny}
the maps $\phi$ and $\psi$ are isomorphisms. The elliptic curve $E$ has no $\mathbb{Q}$-rational points outside of $T$, so that $C$ has no $\mathbb{Q}$-rational points outside of $S$, meaning that the three $\mathbb{Q}$-rational points in $S$ are the only ones on $C$. One of these is at infinity, and the two others correspond to degenerate maps with resultant $0$. 
\item[N2E3] We start with normal form \eqref{eq:cyc2-1} in Lemma~\ref{lem:cyc2-1}, and add the constraint $\phi^2(z)=-\frac{1}{a_1+1}$. This gives an equation for a plane curve $C$:
\begin{equation} \label{eq:N2E3}
C: z^3a_1^3+z^3a_1^2+2z^2a_1^2+z^2a_1+z^4a_1^2+2z^4a_1+z^4+2a_1z+1=0.
\end{equation} 
Searching for $\mathbb{Q}$-rational points on the curve $C$ gives us five points:
\begin{equation} \label{eq:N2E3-pts}
[-2:1:1],[-1:1:1],[0:1:0],[-1:1:0],[1:0:0],
\end{equation}
where the coordinates are $[X:Y:Z]$ with $a_1=X/Z$ and $z=Y/Z$.

\magma tells us this curve is of genus $1$. The point $[-2:1:1]$ is nonsingular, and choosing it as the identity element of the curve, we can bring the curve to minimal Weierstrass form:
\begin{equation} \label{eq:N2E3-ell}
E: y^2 + y = x^3 + x^2 + x.
\end{equation}
Surprisingly, we get the same elliptic curve 19a3 from graph N2E2! We quickly repeat the calculations showing that the five points in \eqref{eq:N2E3-pts} are the only rational points of the curve $C$. The birational map from $\bar{C}$ to $\bar{E}$ (where $\bar{C}$ and $\bar{E}$ are the projective closures of $C$ and $E$ in $\mathbb{P}^2_{[X:Y:Z]}$, respectively, with $a_1=X/Z$ and $z=Y/Z$ for $C$) is 
\begin{tiny}
\begin{equation*}
\begin{split}
\varphi = [&
X^2Y^3 + XY^4 - X^2Y^2Z - XY^3Z + Y^4Z + XY^2Z^2 - 2Y^3Z^2 - XYZ^3 + 2Y^2Z^3 - YZ^4: \\
&X^2Y^2Z + XY^3Z - XY^2Z^2 + 2XYZ^3 + 2Y^2Z^3 - 3YZ^4 + 2Z^5: \\
&Y^3Z^2 - 3Y^2Z^3 + 3YZ^4 - Z^5
]
\end{split}
\end{equation*}
\end{tiny}
with inverse
\begin{tiny}
\begin{equation*}
\begin{split}
\psi = [&
-256X^4 + 256X^3Y - 512X^3Z + 768X^2YZ - 256XY^2Z - 256Y^3Z - 256X^2Z^2 + 512XYZ^2 - 512Y^2Z^2: \\
&256X^4 + 512X^3Z + 256X^2Z^2: \\
&256X^4 + 512X^3Z - 256X^2YZ + 256X^2Z^2 - 256XYZ^2
].
\end{split}
\end{equation*}
\end{tiny}
Outside the sets 
\begin{tiny}
\begin{equation*}
\begin{split}
S &= \{[-2:1:1],[-1 : 1 : 1],  [0 : 1 : 0], [-1 : 1 : 0], [1:0:0]\} \subset C(\mathbb{C}), \\
T &= \{[0:1:0],[0:-1:1],[0:0:1], [-\frac{1}{2}\pm\frac{1}{2}i\sqrt{3}: 0: 1],[-\frac{1}{2}\pm\frac{1}{2}i\sqrt{3}:-1:1],[-1:-\frac{1}{2}\pm\frac{1}{2}i\sqrt{3}:1]\} \subset E(\mathbb{C})
\end{split}
\end{equation*}
\end{tiny}
the birational maps $\varphi$ and $\psi$ are isomorphisms. The elliptic curve $E$ has no $\mathbb{Q}$-rational points outside of $T$, so that $C$ has no $\mathbb{Q}$-rational points outside of $S$, meaning that the five $\mathbb{Q}$-rational points in $S$ are the only ones on $C$. These five points do not correspond to maps realizing graph N2E3, since they are either at infinity or have $z=1$, clearly impossible, since $1$ is periodic, while $z$ should be strictly preperiodic.
\item[N2E4] We start with normal form \eqref{eq:cyc2-1} in Lemma~\ref{lem:cyc2-1}, and add two constraints: First, we require a $\mathbb{Q}$-rational root for $\frac{\Phi_1^*(z)}{z-1}$ (by Proposition~\ref{prop:R2P4} one of these must be different from $1$). Second, we add the constraint $\phi(w)=-\frac{1}{a_1+1}$. We get the system:
\begin{eqnarray*}
\begin{cases}
z^2a_1+z^2+a_1z+z+1 = 0, \\ 
w^2+a_1w+1 = 0.
\end{cases}
\end{eqnarray*}
Eliminating the variable $a_1$ we get a curve $C$ defined by the equation
\begin{equation} \label{eq:N2E4}
C: -z^2-z^2w^2+z^2w-z-zw^2+zw+w=0.
\end{equation}
The curve $C$ is nonsingular of genus $1$, and a search provides us with four $\mathbb{Q}$-rational points:
$$  [0 : 1 : 0], [0 : 0 : 1], [-1 : 0 : 1], [1 : 0 : 0], $$
where the coordinates are $[X:Y:Z]$ with $z=X/Z$ and $w=Y/Z$.

Choosing $[0:0:1]$ as the identity element, and bringing to minimal Weierstrass form gives us the elliptic curve
\begin{equation*}
E: y^2 + xy + y = x^3 + x^2.
\end{equation*}
This is curve 15a8 in Cremona's database, which has Mordell--Weil rank $0$ and torsion subgroup $\mathbb{Z}/4\mathbb{Z}$, where the four torsion points are:
$$  [0 : 0 : 1], [0 : -1 : 1], [-1 : 0 : 1],  \mathcal{O} = [0 : 1 : 0].$$
The curves $C$ and $E$ are isomorphic by the map
$$ x = z-{\frac {z}{w}}+{\frac {z}{{w}^{2}}}+{w}^{-2}, \quad
y = -z-1-{\frac {z}{{w}^{3}}}-{w}^{-2}-{w}^{-3}$$ 
with inverse
$$ z = {\frac {y \left( y+1 \right) ^{2}}{{y}^{3}+{y}^{2}+yx+y-{x}^{2}}}, \quad
w ={\frac { \left( y+1 \right)  \left( -yx-y+{x}^{2} \right) }{{y}^{3}+
{y}^{2}+yx+y-{x}^{2}}}.$$
Since the curves are isomorphic, we see that the four $\mathbb{Q}$-rational points we found on $C$ are the only $\mathbb{Q}$-rational points. None of these points correspond to maps realizing graph N2E4, since they are either at infinity, or have either $z=0$ or $w=0$, which is clearly impossible. 
\item[N2E5] We start with normal form \eqref{eq:cyc2} in Lemma~\ref{lem:cyc2}, and add the constraint that $1$ is a preimage of $-\frac{1}{a_1}$. We get
\begin{equation*}
b_2 = -a_1(1+a_1).
\end{equation*}
The other preimage of $-\frac{1}{a_1}$ has to be
$$-\frac{1}{a_1+1},$$
so we add two constraints, that $\phi(z)=1$ and $\phi(y)=-\frac{1}{a_1+1}$. These give the following two equations:
\begin{eqnarray*}
\begin{cases}
-a_1z-1-z^2a_1^2-z^2a_1 &= 0,\\
-a_1y-1+y^2a_1 &= 0.
\end{cases}
\end{eqnarray*}
Using the second equation, we eliminate $a_1$ and get a single equation:
\begin{equation*}
C : (-1+y-y^2)z^2+(-y^2+y)z-y^4-y^2+2y^3 = 0.
\end{equation*}
One can check using \magma that the curve $C$ is birational to $C' : X^2+Y^2+Z^2 = 0$,
and therefore $C$ can only have singular rational points. These are:
\[[0 : 0 : 1], [1 : 0 : 1], [0 : 1 : 0],\]
where the coordinates are $[X:Y:Z]$ with $y=X/Z$ and $z=Y/Z$.
None of these points correspond to quadratic maps realizing N2E5: one of the points is at infinity, and the others either have $y=0$ or $z=0$, and this is impossible, since $0$ is periodic, while $y$ and $z$ should be strictly preperiodic. 
\end{description}
\end{proof}

The next proposition implies Theorem~\ref{thm:main-thm1}.

\begin{prop} \label{prop:hasse}
Assume Conjecture~\ref{conj:main2}. Any non-PCF quadratic map with a periodic critical point of period $2$ that does not realize one of the graphs in Table~\ref{table:r2} must admit one of the graphs in Table~\ref{table:n2e}.
\end{prop}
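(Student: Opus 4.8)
The plan is to prove the contrapositive: under Conjecture~\ref{conj:main2}, a non-PCF quadratic map $\phi$ with a periodic critical point of period $2$ whose graph admits none of N2E1--N2E5 must already realize one of R2P1--R2P7. Write $G := G_\phi$. First I would assemble the structural constraints. By Lemma~\ref{lem:recursive}, $G \in \mathcal{G}$, so $G$ is a union of cycles with rooted preimage trees attached. Since the critical point has period $2$ and the other critical point is not preperiodic (as $\phi$ is non-PCF), the critical value is the only vertex of indegree $1$; every other vertex has indegree $0$ or $2$, because over $\mathbb{Q}$ a non-critical value has either no rational preimage or two (one rational root of a quadratic with rational coefficients forces the second), while a coincident preimage occurs only at a critical value. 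Consequently each attached tree is binary, every internal vertex having exactly two children.

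Next I would fix the cyclic part. By Conjecture~\ref{conj:main2} all periodic points have period $1$ or $2$, and Lemma~\ref{lem:periodic-limits} gives at most one $2$-cycle (necessarily the critical one) and at most three fixed points. By Remark~\ref{rem:2-fixed}, the number $f$ of rational fixed points is $0$, $1$, or $3$, unless $\Phi_1^*$ has a double root, which is the single class of Proposition~\ref{prop:R2P4}. Thus $G$ is determined by $f$ together with the shapes of the binary trees above (i) the unique non-periodic preimage $r$ of the critical point, and (ii) each rational fixed point.

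The heart of the proof is then a finite case analysis on $f$, in which N2E1--N2E5 occur exactly as the minimal forbidden tree shapes. For $f = 0$, no preimage chain above $r$ may have length $3$ (that subgraph is N2E2), and $r$ may not have both of its preimages carry a further preimage (that subgraph is N2E5); together with the indegree dichotomy this leaves precisely the trees of R2P1, R2P3, and R2P5. For $f \geq 1$, the presence of a fixed point already forbids $r$ from having any preimage at all (that subgraph is N2E1), so the critical cycle carries only its forced single tail; meanwhile no fixed-point tree may have height $3$ (N2E3), and once $f \geq 2$ no fixed-point tree may have height $2$ either, since a second fixed point together with such a tree is N2E4. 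This forces R2P2 or R2P6 when $f = 1$, R2P4 when $f = 2$ (the double-root case), and R2P7 when $f = 3$. Comparing the surviving graphs with Tables~\ref{table:r2-pcf} and~\ref{table:r2} finishes the contrapositive.

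The step I expect to be the real obstacle is exhaustiveness: certifying that N2E1--N2E5 are genuinely \emph{all} the minimal forbidden configurations, so that every graph in $\mathcal{G}$ avoiding them lies at or below one of R2P1--R2P7 in the subgraph ordering. This is the purpose of the recursive generation of graphs mentioned after Lemma~\ref{lem:recursive}: one builds the Hasse diagram of admissible candidates from the bottom up and verifies, level by level, that each one-step enlargement of an R2-graph either remains an R2-graph or acquires one of the N2E-graphs as a subgraph, so the upward traversal terminates. The remaining work is careful bookkeeping --- pinning down the indegree dichotomy and confirming that the caterpillar-versus-bushy split for the $f = 0$ trees is captured exactly by N2E5 and by no omitted graph.
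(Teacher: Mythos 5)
Your proof is correct, but it runs in the opposite direction from the paper's. The paper works upward through a Hasse diagram: it orders the graphs of Tables~\ref{table:r2} and~\ref{table:n2e} by subgraph isomorphism and checks (explicitly only for R2P7, the remaining cases being left as similar) that every one-step application of the recursion steps defining $\mathcal{G}$ to a listed graph either lands back in the list or picks up one of N2E1--N2E5; since any $G_\phi$ outside the list is built from R2P1 by finitely many such steps (Lemma~\ref{lem:recursive} together with Conjecture~\ref{conj:main2} and Lemma~\ref{lem:periodic-limits}), it must acquire an N2E subgraph along the way. You instead work downward: you take an arbitrary graph admitting no N2E graph and classify it outright by the number $f$ of rational fixed points, using the indegree dichotomy, Lemma~\ref{lem:periodic-limits}, and Remark~\ref{rem:2-fixed} with Proposition~\ref{prop:R2P4} for $f=2$. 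Both proofs amount to the same finite combinatorial verification, but yours is the more self-contained write-up: the case analysis ($f=0$: N2E2 bounds the tree over the preimage $r$ of the critical point to height $2$ and N2E5 excludes the bushy height-$2$ shape, leaving R2P1, R2P3, R2P5; $f=1$: N2E1 kills the tree over $r$ and N2E3 bounds the fixed-point tree, leaving R2P2, R2P6; $f\geq 2$: N2E4 additionally flattens all fixed-point trees, leaving R2P4, R2P7) is already exhaustive. Consequently your closing worry is misplaced: you do not need the recursive generation of candidate graphs to certify that N2E1--N2E5 are all of the minimal obstructions, because your case analysis proves precisely that statement. The one point you should make explicit is that a rational fixed point of a non-PCF map with a critical $2$-cycle always has its second (non-fixed) preimage rational and hence in $G_\phi$: this follows from Vieta applied to the preimage quadratic, which can neither drop degree (that happens only over $\phi(\infty)$, which here is the period-$2$ critical point, not a fixed point) nor have a double root (the fixed point would then be a fixed critical point, forcing PCF). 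The containments of N2E1 and N2E4 use these preimage vertices, so this deserves a sentence rather than being folded silently into the indegree dichotomy.
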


\begin{proof}
We can represent the graphs in Tables~\ref{table:n2e} and \ref{table:r2} in a Hasse diagram (a graph representation of a partially ordered set, see for instance K.\ Rosen~\cite[\S{1.4.3}]{book:rosen1999}) with respect to the partial order of subgraph isomorphism:
\begin{equation*}
\xygraph{
!{<0cm,0cm>;<1cm,0cm>:<0cm,1cm>::}
!{(0,0) }*+{{R2P1}}="R2P1"
!{(-3,1) }*+{{R2P2}}="R2P2"
!{(1,1) }*+{{R2P3}}="R2P3"
!{(-5,2) }*+{{R2P4}}="R2P4"
!{(-3,2) }*+{{R2P6}}="R2P6"
!{(1,2) }*+{{R2P5}}="R2P5"
!{(-1,2) }*+{{\color{red}N2E1}}="N2E1"
!{(-7,3) }*+{{R2P7}}="R2P7"
!{(-5,3) }*+{{\color{red}N2E4}}="N2E4"
!{(-3,3) }*+{{\color{red}N2E3}}="N2E3"
!{(1,3) }*+{{\color{red}N2E2}}="N2E2"
!{(-1,3) }*+{{\color{red}N2E5}}="N2E5"
"R2P1"-"R2P2"
"R2P1"-"R2P3"
"R2P2"-"R2P4"
"R2P2"-"R2P6"
"R2P2"-"N2E1"
"R2P3"-"R2P5"
"R2P3"-"N2E1"
"R2P4"-"R2P7"
"R2P4"-"N2E4"
"R2P6"-"N2E3"
"R2P6"-"N2E4"
"R2P5"-"N2E2"
"R2P5"-"N2E5"
}
\end{equation*}

This Hasse diagram is complete with respect to the recursively defined set $\mathcal{G}$ introduced in Section~\ref{sec:normal} in the sense that for any graph $G$ in the diagram, if we  use recursion step 1 or 2 (in the definition of the set $\mathcal{G}$ in Section~\ref{sec:normal}) to produce a new graph $G'$ from $G$ then $G'$ is either already in the list, or contains one of the graphs $N2E1,...,N2E5$. Note that according to Conjecture~\ref{conj:main2} and Lemma~\ref{lem:periodic-limits}, we can only add 1-cycles (recursion step 1) to a graph, and we can only add at most three of these (by Lemma~\ref{lem:periodic-limits}).

We show this explicitly for the graph R2P7: If we use step 2 to add two preimages to the preimage of $0$, then we get a graph containing the graph N2E1. If we use step 2 to add two preimages to the preimage of one of the fixed points, we get a graph containing the graph N2E4. We cannot use step 1 since there are already three fixed points.

Now suppose $\phi$ is a non-PCF quadratic map defined over $\mathbb{Q}$ with a $\mathbb{Q}$-rational periodic critical point of period $2$ such that $G_\phi$ is not on the list. By Lemma~\ref{lem:recursive} we know that $G_\phi$ is in $\mathcal{G}$, and therefore can be obtained from R2P1 by a finite number of recursion steps, and therefore must admit one of the graphs N2E1,...,N2E5. But these graphs are inadmissible, so that the graph $G_\phi$ is also inadmissible, contradiction.
\end{proof}

\begin{thm}
A quadratic map defined over $\QQ$ with a $\QQ$-rational period critical point of period $2$ has no $\QQ$-rational periodic point of period $N$ for $3\leq{N}\leq{6}$.
\end{thm}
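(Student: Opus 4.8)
The plan is to use the normal form \eqref{eq:cyc2} from Lemma~\ref{lem:cyc2}, so that any quadratic map with a $\QQ$-rational periodic critical point of period $2$ is written as $\phi(z) = (a_1 z + 1)/(b_2 z^2)$ with two free parameters $a_1, b_2 \in \QQ$. The existence of a $\QQ$-rational periodic point of period $N$ means that the $N$-th dynatomic polynomial $\Phi_N^*$ has a $\QQ$-rational root. For each $N \in \{3,4,5,6\}$ I would compute $\Phi_{N,\phi}^*(z)$ explicitly as a polynomial in $z$ whose coefficients are polynomials in $a_1, b_2$, and then require it to have a root $z = \zeta$. This produces an equation $\Phi_N^*(\zeta; a_1, b_2) = 0$ in the three variables $(\zeta, a_1, b_2)$, cutting out an affine curve $C_N$ in $\AA^3$ — it is a curve rather than a surface because the single dynatomic equation imposes one condition on the two-dimensional parameter space, with $\zeta$ as the extra coordinate recording the periodic point. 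The task reduces to showing each $C_N$ has no $\QQ$-rational points beyond degenerate ones (points at infinity, points corresponding to resultant-zero maps, or points where $\zeta$ collides with an iterate of $0$ forcing a smaller period).

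The key steps, carried out uniformly for each $N$, mirror the inadmissibility arguments in Proposition~\ref{prop:inadmissible}. First, compute $\Phi_N^*$; for $N=6$ in particular this is a substantial but mechanical symbolic computation best delegated to \magma. Second, for each $N$ determine the genus of $C_N$ (or of its relevant components after discarding factors that force $\zeta$ to have period properly dividing $N$). Third, according to the genus, finish the argument by the appropriate descent: if $C_N$ has genus $0$, check it has no smooth $\QQ$-rational points; if genus $1$, pick a rational base point, pass to the minimal Weierstrass model, and invoke the Mordell--Weil rank and torsion data from Cremona's tables exactly as in cases N2E1--N2E4, listing all $\QQ$-points via the torsion subgroup; if the curve has genus $\ge 2$, apply Chabauty--Coleman or the Mordell conjecture (effectively, a descent or a known model) to enumerate the finitely many $\QQ$-points. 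In every case, the final step is to verify that each of the finitely many $\QQ$-rational points on $C_N$ is degenerate, so no genuine period-$N$ point exists.

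The main obstacle I expect is the case $N=6$ (and to a lesser extent $N=5$): the dynatomic polynomial $\Phi_6^*$ has degree $\deg \Phi_6^* = 2^6 - 2^3 - 2^2 + 2 = 54$ in $z$, so the resulting curve $C_6$ is of high genus, and unlike the elliptic-curve cases there is no torsion-subgroup shortcut. Here the rank of the Jacobian may be positive, and one may be forced into a Chabauty--Coleman argument at a well-chosen prime, or into splitting $C_6$ into lower-genus quotients and combining their rational-point data. This is precisely the analogue of the difficulty Stoll faced for period $6$ in the quadratic polynomial case~\cite{article:stoll2008}, and I would expect either to reduce to a curve whose Jacobian has rank strictly less than its genus (enabling Chabauty) or to exploit an extra automorphism of $C_6$ coming from the two preimages of the period-$6$ cycle to descend to a manageable quotient.

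For the genus-$1$ cases ($N=3,4$, and possibly one component for $N=5$), the argument is completely routine given the earlier machinery: I would simply record the Cremona label, the rank-$0$ conclusion, the torsion subgroup, and the check that all torsion points map to degenerate quadratic maps — the identical template used five times in Proposition~\ref{prop:inadmissible}. The genus-$0$ cases, if any arise, are handled by checking for the absence of smooth rational points (as in case N2E5, where $C$ is birational to the anisotropic conic $X^2 + Y^2 + Z^2 = 0$). The whole theorem thus follows once the genus of each $C_N$ is computed and the corresponding rational-point determination is carried out.
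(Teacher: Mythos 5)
There is a genuine gap in your setup, and it occurs before any of the curve machinery starts. With the two-parameter normal form \eqref{eq:cyc2}, the locus $\Phi_N^*(\zeta;a_1,b_2)=0$ is \emph{one} equation in the three-dimensional space $\AA^3_{(\zeta,a_1,b_2)}$, so it is a \emph{surface}, not a curve; your justification (``one condition on the two-dimensional parameter space'') miscounts because $\zeta$ adds a dimension that the single dynatomic equation does not remove. The root cause is that \eqref{eq:cyc2} is not a canonical form: each conjugacy class has a one-parameter family of representatives, via the residual conjugations $z\mapsto\lambda z$ acting by $(\zeta,a_1,b_2)\mapsto(\zeta/\lambda,\lambda a_1,\lambda^3 b_2)$. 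This action is defined over $\QQ$, so if your surface had even one non-degenerate rational point it would have infinitely many, and none of the tools you invoke (genus, Mordell--Weil rank, torsion, Chabauty) apply to a surface at all. The paper kills this redundancy first, in two ways: for $N=3,4$ it uses the leftover conjugation freedom to place the hypothetical period-$N$ point at $z=1$, so the condition $\Phi_N^*(1)=0$ is a genuine plane curve in $(a_1,b_2)$; for $N=5,6$ it uses the canonical one-parameter form \eqref{eq:cyc2-unique} of Lemma~\ref{lem:cyc2-unique}, so that $\Phi_N^*(b_2,z)=0$ is a curve in the $(b_2,z)$-plane. Your proof cannot proceed as written without one of these normalizations.

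Two secondary points. First, your genus expectations are off: the curves for $N=3,4$ are not elliptic (the paper's $N=3$ curve splits over $\QQ(\sqrt{-3})$ into two conics, and the $N=4$ curve is birational to the pointless conic $X^2+Y^2+Z^2=0$), so the ``routine Cremona-table template'' is not what is used there; this is harmless since your framework is genus-agnostic. Second, for $N=5,6$ you correctly anticipate passing to a quotient, but the relevant automorphism is not one ``coming from the two preimages of the period-$6$ cycle'': it is the order-$N$ automorphism $(b_2,z)\mapsto(b_2,\phi(z))$ rotating the cycle, and the quotient is computed via Morton's trace map, taking genus $4$ to a pointless conic for $N=5$ and genus $15$ to a genus-$2$ curve (handled by Chabauty plus the Mordell--Weil sieve) for $N=6$. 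Note also a subtlety your last step glosses over: for $N=6$ the quotient curve \emph{does} have non-degenerate rational points, and the proof must check that these do not lift to rational points of the dynatomic curve upstairs (the relevant degree-$6$ factor of $\Phi_6^*$ has cyclic Galois group and no rational root), so ``verify each rational point is degenerate'' is not quite the right final check once quotients enter the picture.
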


\begin{proof} \quad
\begin{description}[leftmargin=0cm, labelindent=0cm]
\item[N=3] We start with normal form \eqref{eq:cyc2} in Lemma~\ref{lem:cyc2}, and add the constraint that $1$ is a root of the dynatomic polynomial $\Phi_3^*(z)$. We get the equation
\begin{equation} \label{eq:N=3}
b_2^2+(2a_1^2+3a_1+1)b_2+a_1^4+3a_1^3+4a_1^2+3a_1+1=0.
\end{equation}
This equation is reducible over $\mathbb{Q}(\sqrt{-3})$; we get:
\begin{equation*}
b_2 = -(a_1^2+\frac{3}{2}a_1+\frac{1}{2})\pm\frac{\sqrt{-3}}{2}(a_1+1).
\end{equation*}
For any $a_1\neq{-1}$ rational $b_2$ is irrational, so that the only rational point of these two conics not at infinity is $(a_1,b_2)=(-1,0)$ (this is the point of intersection of the two conics). There is another rational point at infinity. 
These two points are the only $\mathbb{Q}$-rational solutions to equation~\ref{eq:N=3}. The point $(-1,0)$ corresponds to a map with resultant $0$. Thus we see that there are no quadratic maps defined over $\mathbb{Q}$ having a $\QQ$-rational periodic critical point of period $2$ and a $\QQ$-rational periodic point of period $3$.
\item[N=4] We start with normal form \eqref{eq:cyc2} in Lemma~\ref{lem:cyc2}, and add the constraint that $1$ is a root of the dynatomic polynomial $\Phi_4^*(z)$. We get the equation
\begin{equation} \label{eq:N=4}
\begin{split}
b_2^4&+(4a_1^2+1+5a_1)b_2^3+ \\
&(18a_1^3+6a_1+16a_1^2+1+7a_1^4)b_2^2+\\
&(25a_1^2+25a_1^5+43a_1^3+1+8a_1+44a_1^4+6a_1^6)b_2+\\
&1+7a_1+2a_1^8+12a_1^7+55a_1^5+\\
&61a_1^4+33a_1^6+46a_1^3+23a_1^2=0.
\end{split}
\end{equation}
After resolution of singularities, we find that the curve $C$ defined by equation \eqref{eq:N=4} is birational to
\begin{equation*}
C' : X^2+Y^2+Z^2 = 0,
\end{equation*}
embedded in the projective space $\mathbb{P}^2_{[X:Y:Z]}$. The curve $C'$ has no $\mathbb{Q}$-rational points, and so $C$ cannot have any nonsingular $\mathbb{Q}$-rational points. The singular points of $C$ are 
$$[-1 : 0 : 1], [0 : 1 : 0]$$
(where the coordinates are $[X:Y:Z]$ with $a_1=X/Z$ and $b_2=Y/Z$), one of which is at infinity, and the other one corresponds to a map with $0$ resultant. defined over $\mathbb{Q}$ having a $\QQ$-rational periodic critical point of period $2$ and a $\QQ$-rational periodic point of period $4$. 
\item[N=5] We start with normal form~\eqref{eq:cyc2-unique} in Lemma~\ref{lem:cyc2-unique}. If $z$ is a point of period $5$ then it is a root of the $5$-th dynatomic polynomial. Let $C_1$ be the curve defined by the equation $\Phi_5^*(b_2,z)=0$ in the affine plane $\AA^2_{[b_2,z]}$. Using \magma, one can check that this curve is absolutely irreducible and has genus $4$, and is not hyperlliptic. The curve $C$ has an automorphism $\sigma$ of order $5$ defined by sending a point $(b_2,z)\in{C}$ to $(b_2, \phi(z))$. We now calculate the quotient of $C$ by the cyclic group $\left<\sigma\right>$. We follow the trace map method described in Morton~\cite{article:morton1998}: Let $tr(z)=z+\phi(z)+\phi^2(z)+...+\phi^4(z)$ be the \emph{trace} of the periodic point $z$; the image of the map $\Psi: (b_2,z) \mapsto (b_2, tr(z))$ is birational to the quotient curve $C/\left<\sigma\right>$. When $\phi$ is a polynomial, the image of $\Psi$ can be calculated by taking the resultant of $\Phi_5^*(a_1,z)$ and $t-tr(z)$ with respect to $z$. However, in our case $t-tr(z)$ is not a polynomial, since $tr(z)$ is a rational function in $z$. We fix this by clearing out the denominator of $tr(z)$ and show that this does not add any new roots. Let $\phi^n(z)=f_n(z)/g_n(z)$ where $f_n,g_n\in\QQ[z]$ with no common factor, then the denominator can be cleared by multiplying $t-tr(z)$ by $A(z) := \prod_{n=1}^4g_n(z)$. The expression $A(z)$ cannot be $0$ for any periodic point $z$ of period $5$ because then $g_i(z) = 0$ for some $1\leq{i}\leq{4}$, which implies $\phi^i(z)=\infty$, but $\infty$ is of period $2$ and cannot appear in a $5$-cycle. Now we take the resultant of of $\Phi_5^*(a_1,z)$ and $A(z)(t-tr(z))$ with respect to $z$, and find the following irreducible component:
\begin{equation}
\begin{split}
C_0: b_2^6t^6+8b_2^5t^5+(28b_2^4+4b_2^5)t^4+(54b_2^3+17b_2^4-8b_2^5)t^3+ \\
 (-27b_2^4+28b_2^3+60b_2^2)t^2+(36b_2+18b_2^2-18b_2^4-44b_2^3)t \\
+3b_2+9-22b_2^2+27b_2^4-b_2^3 = 0.
\end{split}
\end{equation}
The plane curve $C_0$ can be checked (e.g.\ using \magma) to be birational over $\RR$ to the conic $X^2+Y^2+Z^2=0$ in the projective plane $\PP^2_{[X:Y:Z]}$, which has no any $\QQ$-rational points. Therefore $C_0$ has only rational points that are singular, and these are easily checked to be $[0 : 1 : 0], [1 : 0 : 0]$, in homogeneous coordinates $[X:Y:Z]$ where $b_2 = X/Z$ and $z=Y/Z$. The rational points on $C_1$ consist of the points of indeterminacy of $\Psi$ together with rational preimages of the singular points of $C_0$. However, the preimages of the singular points are not defined over $\QQ$ and therefore $C_1$ has only $3$ rational points, which are the indeterminacy points of $\Psi$ : 
\[[0 : 1 : 1], [0 : 1 : 0], [1 : 0 : 0],\]
in homogeneous coordinates $[X:Y:Z]$ where $b_2 = X/Z$ and $z=Y/Z$. None of these points correspond to quadratic maps having the required properties.
\item[N=6] We start with normal form~\eqref{eq:cyc2-unique} in Lemma~\ref{lem:cyc2-unique}. Suppose $z$ is a point of period $6$. Therefore it is a root of the $6$-th dynatomic polynomial. Let $C_1$ be the curve defined by the equation $\Phi_6^*(b_2,z)=0$ in the affine plane $\AA^2_{[b_2,z]}$. Using \magma, one can check that this curve is absolutely irreducible and has genus $15$. 
Using the trace map as for $N=5$, we get the curve
\begin{equation*}
\begin{split}
C_0: b_2^9t^9 + 14b_2^8t^8 + 9b_2^8t^7 - 27b_2^8t^6 + 90b_2^7t^7 + 85b_2^7t^6 - \\
    211b_2^7t^5 - 222b_2^7t^4 + 315b_2^7t^3 + 346b_2^6t^6 + 358b_2^6t^5 - \\
    811b_2^6t^4 - 1123b_2^6t^3 + 1197b_2^6t^2 + 637b_2^6t + 343b_2^6 + \\
    872b_2^5t^5 + 854b_2^5t^4 - 1891b_2^5t^3 - 2288b_2^5t^2 + 2079b_2^5t + \\
    1519b_2^5 + 1488b_2^4t^4 + 1217b_2^4t^3 - 2705b_2^4t^2 - 2117b_2^4t + \\
    1953b_2^4 + 1713b_2^3t^3 + 1002b_2^3t^2 - 2182b_2^3t - 508b_2^3 + \\
    1278b_2^2t^2 + 411b_2^2t - 720b_2^2 + 558b_2t + 54b_2 + 108 = 0
\end{split}
\end{equation*}
which is obtained by reducing modulo the automorphism of order $6$ sending $(b_2,z)$ to $(b_2,\phi(z))$. This curve has genus $2$, and is birational to the curve
\begin{equation*}
C_h : y^2 = x^6 + 6x^5 - x^4 - x^2 - 2x - 3.
\end{equation*}
The curve $C_h$ has trivial torsion, and is of rank $1$. Using the implementation of the Chabauty method and Mordell--Weil sieve in \magma, we find all $5$ rational points on the curve. We lift these rational points back to $C_0$ to find the three points 
\[(-2/7, 7), (-9503/3136, 14420/9503), (-918476/6388711, 6323527/459238)\]
together with two points at infinity:
\[[0:1:0], [1:0:0],\]
where the coordinates are in $\PP^2_{[X:Y:Z]}$ with $b_0 = X/Z$ and $z=Y/Z$. 

We must also check for points of indeterminacy of the birational map $C_0\to{C_h}$, but in fact it turns out that there are no $\QQ$-rational indeterminacy points, so $C_0$ has exactly five $\QQ$-rational points. For each of the three finite points, we can recover the corresponding quadratic map from the first coordinate. For all three maps, the $6$-th dynatomic polynomial has a degree $6$ factor with cyclic Galois group of order $6$ but has no $\QQ$-rational roots.
\end{description}
\end{proof}

\begin{prop}\label{prop:critical5}
Let $\phi$ be a quadratic map defined over $\mathbb{Q}$, then $\phi$ has no $\mathbb{Q}$-rational periodic critical point of period $5$.
\end{prop}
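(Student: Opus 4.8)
The plan is to find a normal form for a quadratic map with a periodic critical point of period $5$, use the resulting constraint to build a curve parametrizing such maps, and then show that this curve has no relevant $\mathbb{Q}$-rational points. The strategy mirrors the inadmissibility proofs of Proposition~\ref{prop:inadmissible} and the period-$N$ arguments in the preceding theorem, except that now the marked periodic point is itself \emph{critical}, so there is an extra ramification constraint to impose.

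First I would set up coordinates. By conjugating with an element of $\PGL_2(\mathbb{Q})$, I can place the periodic critical point at $0$ and its image at $\infty$, as in Lemma~\ref{lem:cyc2}; but since the period is now $5$ rather than $2$, the orbit $0\mapsto\infty\mapsto\cdots$ must return to $0$ after five steps, and each point of the orbit (except the critical image) has two preimages. The critical condition forces the map to be totally ramified at $0$, i.e.\ $\phi(z)=F(z)/G(z)$ with $0$ a double preimage of $\phi(0)=\infty$; this kills one coefficient, as in the period-$2$ normal form. After normalizing, I expect a two-parameter family (say with parameters playing the role of $a_1$ and $b_2$), and the requirement that $0$ have exact period $5$ becomes a polynomial relation obtained from $\phi^5(0)=0$ together with the dynatomic polynomial $\Phi_5^*$, cutting out a plane curve $C$ in the parameter space.

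Next I would analyze $C$. The key computational step is to identify the geometry of $C$ using \magma: determine its genus and attempt to reduce to a curve whose rational points are known or computable. Based on the pattern of the period-$5$ and period-$6$ computations in the theorem above, I expect that the natural order-$5$ symmetry of the critical cycle (cyclic permutation of the orbit points) gives an automorphism $\sigma$ of order $5$ on $C$, and that passing to the quotient $C/\langle\sigma\rangle$ via the trace-map method of Morton~\cite{article:morton1998} yields a curve of small genus. The most optimistic outcome, consistent with the recurring appearance of the anisotropic conic in the excerpt, is that the quotient is birational to $X^2+Y^2+Z^2=0$ over $\mathbb{Q}$, which has no $\mathbb{Q}$-rational points; then any rational point on $C$ would have to be an indeterminacy point of the quotient map or a singular point, and one checks these all correspond to degenerate maps (resultant $0$, points at infinity, or orbit collisions that violate exact period $5$).

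The main obstacle I anticipate is controlling the rational points once the quotient curve has \emph{positive} genus or rank rather than being a pointless conic. If the quotient turns out to be genus $1$ of positive rank, or genus $2$, then merely exhibiting the curve is not enough: I would need a rank computation plus either explicit Mordell--Weil generators and a search bound, or a Chabauty--Mordell--Weil-sieve argument as used for $N=6$ above, to provably enumerate all rational points. A secondary difficulty is bookkeeping: I must verify that each of the finitely many rational points found actually fails to give a genuine period-$5$ critical orbit (because the critical point could collide with another orbit point, or the map could degenerate), which requires tracing each point back through the birational maps and checking the resultant and the factorization of $\Phi_5^*$ at the recovered map, exactly as in the $N=6$ case.
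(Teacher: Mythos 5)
Your overall skeleton---normalize so the critical point sits at $0$ with image $\infty$, cut out a parametrizing curve from the period-$5$ condition, and show that all of its rational points are degenerate---is exactly the paper's. But the device you lean on to make the curve tractable, namely the order-$5$ automorphism and Morton's trace-map quotient, does not exist in this setting. In the paper's $N=5$ and $N=6$ arguments the curve is $\Phi_N^*(b_2,z)=0$ in the $(b_2,z)$-plane: a point of that curve is a \emph{pair} (map, marked periodic point), and $\sigma(b_2,z)=(b_2,\phi(z))$ fixes the map while moving the marked point through its orbit. Here the marked periodic point is the critical point itself, which your normalization pins at $0$, so the parametrizing curve lives purely in coefficient space and carries no $z$-coordinate. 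A cyclic shift of the critical cycle cannot act on this curve: if you re-mark the orbit at $\infty$ and conjugate that point back to $0$, the resulting map has its critical point at the preimage of $0$ in the new cycle rather than at $0$, i.e.\ it lies outside the family. There is consequently no nontrivial quotient to pass to, and this step of your plan is vacuous.

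This misstep is not fatal, because your stated fallback is what actually happens---and it is easier than you fear. Completing your normalization by also requiring $\phi(\infty)=1$ (your sketch leaves the third normalization implicit; without it a one-parameter group of conjugations fixing $0$ and $\infty$ still acts), the family is $\phi(z)=(a_2z^2+a_1z+a_0)/(a_2z^2)$, and the period-$5$ condition $\phi^3(1)=0$ is a single homogeneous quintic defining a curve $C\subset\mathbb{P}^2_{[a_0:a_1:a_2]}$ of geometric genus $1$. A short point search finds a nonsingular rational point, so $C$ is birational to an elliptic curve, which turns out to be $y^2+xy+y=x^3-x^2-x$, Cremona's 17a4, of Mordell--Weil rank $0$ with torsion $\mathbb{Z}/4\mathbb{Z}$; no Chabauty or Mordell--Weil sieve is needed. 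Pulling the four torsion points back through the birational map and checking its indeterminacy loci shows that $C(\mathbb{Q})$ consists of exactly three points, each corresponding to a degenerate map (zero resultant or a point at infinity), which finishes the proof. So your proposal becomes correct once the quotient step is deleted and replaced by a direct genus and rank computation on the coefficient curve, together with the degeneracy check you already describe.
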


\begin{proof}
Suppose that $P\in\mathbb{P}^1(\mathbb{Q})$ is periodic critical point of period $5$ of $\phi$. We can always conjugate $\phi$ so that the critical point $P$ is $0$, and that $\phi(0)=\infty$ and $\phi(\infty)=1$. We get that $\phi$ is of the form
\begin{equation*}
\phi(z)=\frac{a_2z^2+a_1z+a_0}{a_2z^2}=F(z)/G(z).
\end{equation*}
For $0$ to be periodic of period $5$ we must have $\phi^3(1)=0$. For this to happen we must have
\begin{equation*}
\begin{split}
a_2^5+2a_1^5+a_0^5+33a_2^3a_1a_0+53a_2^2a_1^2a_0+44a_2^2a_1a_0^2+\\
35a_2a_1^3a_0+42a_2a_1^2a_0^2+23a_2a_1a_0^3+8a_1^4a_0+13a_1^3a_0^2+\\
11a_1^2a_0^3+11a_2a_1^4+5a_2a_0^4+7a_2^4a_1+7a_2^4a_0+18a_2^3a_1^2+\\
15a_2^3a_0^2+21a_2^2a_1^3+12a_0^3a_2^2+5a_1a_0^4=0.
\end{split}
\end{equation*}
This equation defines a curve $C$ in $\mathbb{P}^2_{[a_0:a_1:a_2]}$. Using \magma, one can check that this curve has genus $1$. A quick search yields three $\mathbb{Q}$-rational points:
\[
[0:-1:1], [-1:1:0], [0:-\frac{1}{2}:1].
\]
The point $[0:-\frac{1}{2}:1]$ is nonsingular, and we can transform this curve to the elliptic curve
\begin{equation*}
E: y^2 + xy + y = x^3 - x^2 - x
\end{equation*}
sending $[0:-\frac{1}{2}:1]$ to $\mathcal{O} = [0 : 1 : 0]$, the unit element of $E$. The elliptic curve $E$ is curve $17a4$ in Cremona's database. It has Mordell--Weil rank of $0$, and a torsion subgroup isomorphic to $\mathbb{Z}/4\mathbb{Z}$. The four rational points on the curve are
\begin{equation*}
\mathcal{O} = [0 : 1 : 0],[ 0 : 0 : 1],  [0 : -1 : 1], [1 : -1 : 1].
\end{equation*}

We denote by $\varphi$ the birational map mapping $C$ to $E$, and by $\psi$ its inverse. These maps can be calculated using \magma and are too lengthy to reproduce here. We denote $S=\Ind(\phi)$ and $T=\Ind(\psi)$, the indeterminacy sets of $\phi$ and $\psi$. Then outside $\phi^{-1}(T)\cup{S}\subset{C}$ and $\psi^{-1}(S)\cup{T}\subset{E}$ we have $\phi$ and $\psi$ are isomorphisms. One can check (using \magma for example) that the only $\mathbb{Q}$-rational points in these two subsets are the seven points listed previously. Thus $C$ contains no other $\mathbb{Q}$-rational points, since otherwise $E$ would as well. We can thus conclude that there are no quadratic maps $\phi$ defined $\mathbb{Q}$ having a periodic critical point of period $5$, since the three points $[0:-1:1], [-1:1:0], [0:-\frac{1}{2}:1]$ correspond to degenerate maps.
\end{proof}

\section{Graph tables}

\begin{table}[h]
\begin{center}
\caption{Realizable PCF quadratic maps with a $\mathbb{Q}$-rational periodic critical point of period 2 (taken from  Lukas, Manes and Yap~\cite{article:lukas-manes-yap2014})}\label{table:r2-pcf}
\begin{tabular}{|c|c|c|}\hline
ID& $\phi(z)$						& Preperiodicity graph\\ 
\hline \hline 
P2P1 & $z^2-1$			& \xygraph{ 
		!{<0cm,0cm>;<2cm,0cm>:<0cm,1cm>::} 
		!{(0,0) }*+{\bullet_{\infty}}="a" 
		!{(1,0) }*+{\bullet_{1}}="b" 
		!{(2,0) }*+{\bullet_{0}}="c" 
		!{(3,0) }*+{\bullet_{-1}}="d" 
		"a":@(l,ru) "a"
		"b":"c"
		"d":@/_/"c"
		"c":@/_/"d"
	}\\
\hline

P2P2 & $\displaystyle\frac{-1}{4z^2-4z}$	&  \xygraph{ 
		!{<0cm,0cm>;<2cm,0cm>:<0cm,1cm>::} 
		!{(0,0) }*+{\bullet_{1/2}}="a" 
		!{(1,0) }*+{\bullet_{1}}="b" 
		!{(2,0) }*+{\bullet_{\infty}}="c" 
		!{(3,0) }*+{\bullet_{0}}="d" 
		"a":"b" 
		"b":"c" 
		"c":@/_/"d" 
		"d":@/_/"c" 
	}  \\
\hline

P2P3 & $\displaystyle\frac{-4}{9z^2-12z}$	&\xygraph{ 
		!{<0cm,0cm>;<1.5cm,0cm>:<0cm,1cm>::} 
		!{(0,-0.25) }*+{\bullet_{2/3}}="a" 
		!{(1,-0.25) }*+{\bullet_{1}}="b" 
		!{(2,-0.25) }*+{\bullet_{4/3}}="c" 
		!{(3,-0.25) }*+{\bullet_{\infty}}="d" 
		!{(4,-0.25) }*+{\bullet_{0}}="e" 
		!{(2,0.75) }*+{\bullet_{1/3}}="f"
		!{(2,1.1) }*+{\ }="x"
		"a":"b" 
		"b":"c" 
		"c":"d"
		"f":"c"
		"d":@/_/"e" 
		"e":@/_/"d" 
	} \\
\hline

P2P4 & $\displaystyle\frac{3 z^2-4 z+1}{1-4 z}$	 		& \xygraph{ 
		!{<0cm,0cm>;<2cm,0cm>:<0cm,1cm>::} 
		!{(0,0.5) }*+{\bullet_{1/2}}="a" 
		!{(1,0.5) }*+{\bullet_{1/4}}="b" 
		!{(2,0.5) }*+{\bullet_{\infty}}="c" 
		!{(1,-0.5) }*+{\bullet_{-2}}="d" 
		!{(2,-0.5) }*+{\bullet_{-1}}="e" 
		!{(0,-0.5) }*+{\bullet_{1/3}}="f" 
		"a":"b" 
		"b":"c" 
		"c":@(r,lu) "c"
		"d":@/_/"e" 
		"a":"b" 
		"e":@/_/"d" 
		"f":"d" 
	} \\ 
\hline

P2P5 & $\displaystyle 2/z^2$			&
\xygraph{
        !{<0cm,0cm>;<2cm,0cm>:<0cm,1cm>::}
               !{(1,0) }*+{\bullet_{0}}="c"
               !{(2,0)}*+{\bullet_{\infty}}="d"
       "c":@/_/"d"
       "d":@/_/"c"
    }\\
\hline

P2P6 & $\displaystyle 1/z^2$			&
 \xygraph{
        !{<0cm,0cm>;<2cm,0cm>:<0cm,1cm>::}
        !{(0,0) }*+{\bullet_{1}}="a"
        !{(-1,0) }*+{\bullet_{-1}}="b"
         !{(1,0) }*+{\bullet_{0}}="c"
         !{(2,0)}*+{\bullet_{\infty}}="d"
        "a" :@(r,lu)  "a"
        "b":"a"
       "c":@/_/"d"
       "d":@/_/"c"
    } \\ 
 \hline
\end{tabular}
\end{center}
\end{table}

\begin{table}[h]
\begin{center}
\caption{Realizable non-PCF quadratic maps with a $\mathbb{Q}$-rational periodic critical point of period 2}\label{table:r2}
\begin{tabular}{|c|c|c|}\hline

ID& $\phi(z)$						& Preperiodicity graph  \\ 
\hline \hline 

R2P1 & $\displaystyle\frac{z+1}{z^2}$			& \xygraph{ 
		!{<0cm,0cm>;<1cm,0cm>:<0cm,1cm>::} 
		!{(-2,0) }*+{\bullet_{-1}}="a" 
		!{(0,0) }*+{\bullet_{0}}="b" 
		!{(2,0) }*+{\bullet_{\infty}}="c" 
		"a":"b"
		"b":@/_/"c"
		"c":@/_/"b"
	} 
	\\ 
\hline

R2P2 & $\displaystyle\frac{-z+2}{z^2}$			& \xygraph{ 
		!{<0cm,0cm>;<1cm,0cm>:<0cm,1cm>::} 
		!{(-2,0) }*+{\bullet_{2}}="a" 
		!{(0,0) }*+{\bullet_{0}}="b" 
		!{(2,0) }*+{\bullet_{\infty}}="c" 
		!{(3,0) }*+{\bullet_{-2}}="d"
		!{(5,0) }*+{\bullet_{1}}="e" 		 
		"a":"b"
		"b":@/_/"c"
		"c":@/_/"b"
		"d":"e"
		"e":@(r,lu) "e"
	} 
	\\ 

\hline

R2P3 & $\displaystyle\frac{6z-4}{3z^2}$			& \xygraph{ 
		!{<0cm,0cm>;<1cm,0cm>:<0cm,1cm>::} 
		!{(-2,-1) }*+{\bullet_{1}}="a" 
		!{(-2,1) }*+{\bullet_{2}}="b"
		!{(0,0) }*+{\bullet_{2/3}}="c"
		!{(2,0) }*+{\bullet_{0}}="d"
		!{(4,0) }*+{\bullet_{\infty}}="e"
		"a":"c"
		"b":"c"
		"c":"d"
		"d":@/_/"e"
		"e":@/_/"d"
	} 
	\\ 

\hline

R2P4 & $\displaystyle\frac{3z-2}{z^2}$			& \xygraph{ 
		!{<0cm,0cm>;<1cm,0cm>:<0cm,1cm>::} 
		!{(-2,0) }*+{\bullet_{2/3}}="a" 
		!{(0,0) }*+{\bullet_{0}}="b" 
		!{(2,0) }*+{\bullet_{\infty}}="c" 
		!{(3,-1) }*+{\bullet_{2}}="d"
		!{(4,0) }*+{\bullet_{1}}="e" 		 
		!{(5,-1) }*+{\bullet_{1/2}}="f"
		!{(6,0) }*+{\bullet_{-2}}="g" 		 		
		"a":"b"
		"b":@/_/"c"
		"c":@/_/"b"
		"d":"e"
		"e":@(r,lu) "e"
		"f":"g"
		"g":@(r,lu) "g"		
	} 
	\\ 

\hline

R2P5 & $\displaystyle\frac{-z-1}{6z^2}$			& \xygraph{ 
		!{<0cm,0cm>;<1cm,0cm>:<0cm,1cm>::} 
		!{(-2,-1) }*+{\bullet_{-1/2}}="a" 
		!{(-2,1) }*+{\bullet_{1}}="b"
		!{(0,0) }*+{\bullet_{-1/3}}="c"
		!{(1,-1) }*+{\bullet_{1/2}}="d"		
		!{(2,0) }*+{\bullet_{-1}}="e"
		!{(4,0) }*+{\bullet_{0}}="f"
		!{(6,0) }*+{\bullet_{\infty}}="g"		
		"a":"c"
		"b":"c"
		"c":"e"
		"d":"e"
		"e":"f"
		"f":@/_/"g"
		"g":@/_/"f"
	} 
	\\ 

\hline

R2P6 & $\displaystyle\frac{5z-2}{3z^2}$			& \xygraph{ 
		!{<0cm,0cm>;<1cm,0cm>:<0cm,1cm>::} 
		!{(-2,0) }*+{\bullet_{2/5}}="a" 
		!{(0,0) }*+{\bullet_{0}}="b"
		!{(2,0) }*+{\bullet_{\infty}}="c"
		!{(3,-1) }*+{\bullet_{1/2}}="d"		
		!{(3,1) }*+{\bullet_{2}}="e"
		!{(5,0) }*+{\bullet_{2/3}}="f"
		!{(6,0) }*+{\bullet_{1}}="g"		
		"a":"b"
		"b":@/_/"c"
		"c":@/_/"b"
		"d":"f"
		"e":"f"
		"f":"g"
		"g":@(r,lu) "g"		
	} 
	\\ 

\hline

R2P7 & $\displaystyle\frac{7z-6}{z^2}$			& \xygraph{ 
		!{<0cm,0cm>;<1cm,0cm>:<0cm,1cm>::} 
		!{(-2,0) }*+{\bullet_{6/7}}="a" 
		!{(0,0) }*+{\bullet_{0}}="b" 
		!{(2,0) }*+{\bullet_{\infty}}="c" 
		!{(3,-1) }*+{\bullet_{6}}="d"
		!{(4,1) }*+{\bullet_{1}}="e" 		 
		!{(4,-1) }*+{\bullet_{2/3}}="f"
		!{(5,1) }*+{\bullet_{-3}}="g" 		 		
		!{(5,-1) }*+{\bullet_{3/2}}="h"
		!{(6,1) }*+{\bullet_{2}}="i" 		 				
		"a":"b"
		"b":@/_/"c"
		"c":@/_/"b"
		"d":"e"
		"e":@(r,lu) "e"
		"f":"g"
		"g":@(r,lu) "g"		
		"h":"i"
		"i":@(r,lu) "i"		
	} 
	\\ 

\hline

\end{tabular}
\end{center}
\end{table}

\begin{table}[h]
\begin{center}
\caption{Realizable quadratic maps with a $\mathbb{Q}$-rational periodic critical point of period 3} \label{table:r3}
\begin{tabular}{|c|c|c|}\hline

ID& $\phi(z)$						& Preperiodicity graph  
\\ 
\hline \hline 
R3P0 & $\displaystyle\frac{1}{(z-1)^2}$	&  \xygraph{ 
		!{<0cm,0cm>;<2cm,0cm>:<0cm,1cm>::} 
		!{(0.7,0.5) }*+{\bullet_{\infty}}="a" 
		!{(0.7,-0.5) }*+{\bullet_{0}}="b" 
		!{(0,0) }*+{\bullet_{1}}="c" 
		!{(-.75,0) }*+{\bullet_{2}}="d"
		"a":@/^/"b"
    		"b":@/^/"c"
      		"c":@/^/"a"
		"d":"c"
	} 
	\\ 
 \hline

R3P1 & $\displaystyle\frac{2z^2-z-1}{2z^2}$	 	&  \xygraph{ 
		!{<0cm,0cm>;<2cm,0cm>:<0cm,1cm>::} 		
		!{(0,0) }*+{\bullet_{0}}="a" 
		!{(1,0) }*+{\bullet_{\infty}}="b" 
		!{(0.5,-1) }*+{\bullet_{1}}="c" 
		!{(-0.5,-1) }*+{\bullet_{-1/2}}="d"
		!{(1.5,0	) }*+{\bullet_{-1}}="e"		
		"a":@/^/"b"
    		"b":@/^/"c"
      		"c":@/^/"a"
    		"d":@/^/"a"
      		"e":@/^/"c"       
	} 
	\\ 
 \hline

R3P2 & $\displaystyle\frac{z^2+5z-6}{z^2}$	 	&  \xygraph{ 
		!{<0cm,0cm>;<2cm,0cm>:<0cm,1cm>::} 
		!{(0,0) }*+{\bullet_{0}}="a" 
		!{(1,0) }*+{\bullet_{\infty}}="b" 
		!{(0.5,-1) }*+{\bullet_{1}}="c" 
		!{(-0.5,-1) }*+{\bullet_{-6}}="d"
		!{(1.5,0	) }*+{\bullet_{6/5}}="e"		
		!{(2,-1) }*+{\bullet_{3}}="f"
		!{(3,0	) }*+{\bullet_{2}}="g"				
		"a":@/^/"b"
    		"b":@/^/"c"
      		"c":@/^/"a"
    		"d":@/^/"a"
      		"e":@/^/"c"
    		"f":"g"   
		"g":@(r,lu) "g"		    		   		
	} 
	\\ 
 \hline

R3P3 & $\displaystyle\frac{5z^2-7z+2}{5z^2}$	 	&  \xygraph{ 
		!{<0cm,0cm>;<2cm,0cm>:<0cm,1cm>::} 
		!{(0,0) }*+{\bullet_{0}}="a" 
		!{(1,0) }*+{\bullet_{\infty}}="b" 
		!{(0.5,-1) }*+{\bullet_{1}}="c" 
		!{(-0.5,-1) }*+{\bullet_{2/5}}="d"
		!{(1.5,0	) }*+{\bullet_{2/7}}="e"		
		!{(-2,1) }*+{\bullet_{2}}="f"
		!{(-2,-1) }*+{\bullet_{1/3}}="g"				
		"a":@/^/"b"
    		"b":@/^/"c"
      		"c":@/^/"a"
    		"d":"a"
      		"e":@/^/"c"
    		"f":"d"   
		"g":"d"		    		   		
	} 
	\\ 
 \hline

R3P4 & $\displaystyle\frac{3z^2-5z+2}{3z^2}$	 	&  \xygraph{ 
		!{<0cm,0cm>;<2cm,0cm>:<0cm,1cm>::} 
		!{(0,0) }*+{\bullet_{0}}="a" 
		!{(1,0) }*+{\bullet_{\infty}}="b" 
		!{(0.5,-1) }*+{\bullet_{1}}="c" 
		!{(-0.5,-1) }*+{\bullet_{2/3}}="d"
		!{(1.5,0) }*+{\bullet_{2/5}}="e"		
		!{(-0.5,-2) }*+{\bullet_{-2}}="f"
		!{(0.5,-2) }*+{\bullet_{2}}="g"		
		!{(1.5,-2) }*+{\bullet_{1/3}}="h"
		!{(2.5,-2) }*+{\bullet_{1/2}}="i"		
		"a":@/^/"b"
    		"b":@/^/"c"
      		"c":@/^/"a"
    		"d":@/^/"a"
      		"e":@/^/"c"   
      		"f":"g"
          		"g":@/^/"h"
      		"h":@/^/"g"   		
		"i": "h"		    		   		
	} 
	\\ 
 \hline

R3P5 & $\displaystyle\frac{5z^2-11z+6}{5z^2}$	 	&  \xygraph{ 
		!{<0cm,0cm>;<2cm,0cm>:<0cm,1cm>::} 
		!{(0.5,0) }*+{\bullet_{0}}="a" 
		!{(1.5,0) }*+{\bullet_{\infty}}="b" 
		!{(1,-1) }*+{\bullet_{1}}="c" 
		!{(0,-1) }*+{\bullet_{6/5}}="d"
		!{(2,0) }*+{\bullet_{6/11}}="e"		
		!{(1,-3) }*+{\bullet_{2/3}}="f"
		!{(2,-3	) }*+{\bullet_{2/5}}="g"		
		!{(3,-3) }*+{\bullet_{3}}="h"
		!{(4,-3	) }*+{\bullet_{-3/2}}="i"
		!{(0,-2) }*+{\bullet_{6}}="j"
		!{(0,-4) }*+{\bullet_{3/5}}="k"		
		"a":@/^/"b"
    		"b":@/^/"c"
      		"c":@/^/"a"
    		"d":@/^/"a"
      		"e":@/^/"c"   
      		"f":"g"
          		"g":@/^/"h"
      		"h":@/^/"g"   		
		"i": "h"
      		"j":"f"
      		"k":"f"				
	} 
	\\ 
 \hline

\end{tabular}
\end{center}

\end{table}

\begin{table}[h]
\begin{center}
\caption{Realizable quadratic maps with a $\mathbb{Q}$-rational periodic critical point of period 4}\label{table:r4}
\begin{tabular}{|c|c|c|}\hline
ID & $\phi(z)$						& Preperiodicity graph 
\\ 
\hline \hline 
R4P1 & $\displaystyle\frac{12z^2-11z+2}{12z^2}$	 	&  \xygraph{ 
		!{<0cm,0cm>;<2cm,0cm>:<0cm,1cm>::} 
		!{(0,0) }*+{\bullet_{0}}="a" 
		!{(1,0) }*+{\bullet_{\infty}}="b" 
		!{(1,-1) }*+{\bullet_{1}}="c" 
		!{(0,-1) }*+{\bullet_{1/4}}="d"
		!{(-1,1.5) }*+{\bullet_{2/3}}="e"
		!{(2,-1.5	) }*+{\bullet_{2/11}}="f"
		!{(-1,-1.5) }*+{\bullet_{2/9}}="g"		
		"a":@/^/"b"
    		"b":@/^/"c"
      		"c":@/^/"d"
       		"d":@/^/"a"     		
    		"e":"a"
      		"f":"c"
      		"g":"d"
	} 
	\\ 
 \hline
 
R4P2 & $\displaystyle\frac{7z^2+29z-30}{7z^2}$	 	&  \xygraph{ 
		!{<0cm,0cm>;<2cm,0cm>:<0cm,1cm>::} 
		!{(0,0) }*+{\bullet_{0}}="a" 
		!{(0.75,0) }*+{\bullet_{\infty}}="b" 
		!{(0.75,-1) }*+{\bullet_{1}}="c" 
		!{(0,-1) }*+{\bullet_{6/7}}="d"
		!{(-1,1.5) }*+{\bullet_{-5}}="e"
		!{(2,-1.5	) }*+{\bullet_{30/29}}="f"
		!{(-1,-1.5) }*+{\bullet_{-30}}="g"		
		!{(2,1	) }*+{\bullet_{15/7}}="h"
		!{(3,1) }*+{\bullet_{2}}="i"		
		"a":@/^/"b"
    		"b":@/^/"c"
      		"c":@/^/"d"
       		"d":@/^/"a"     		
    		"e":"a"
      		"f":"c"
      		"g":"d"
      		    		"h":"i"   
		"i":@(r,lu) "i"		    		   		
	} 
	\\ 
 \hline

R4P3 & $\displaystyle\frac{3z^2+z-2}{3z^2}$	 	&  \xygraph{ 
		!{<0cm,0cm>;<2cm,0cm>:<0cm,1cm>::} 
		!{(0,0) }*+{\bullet_{0}}="a" 
		!{(0.5,1) }*+{\bullet_{\infty}}="b" 
		!{(1,0) }*+{\bullet_{1}}="c" 
		!{(0.5,-1) }*+{\bullet_{2/3}}="d"
		!{(-1,0) }*+{\bullet_{-1}}="e"
		!{(2,0) }*+{\bullet_{2}}="f"
		!{(1.5,-1) }*+{\bullet_{-2}}="g"		
		!{(-2,1) }*+{\bullet_{1/2}}="h"
		!{(-2,-1) }*+{\bullet_{-2/3}}="i"		
		"a":@/^/"b"
    		"b":@/^/"c"
      		"c":@/^/"d"
       		"d":@/^/"a"     		
    		"e":"a"
      		"f":"c"
      		"g":"d"
      		"h":"e"
      		"i":"e"
	} 
	\\ 
 \hline

R4P4 & $\displaystyle\frac{15z^2-11z+2}{15z^2}$	 	&  \xygraph{ 
		!{<0cm,0cm>;<2cm,0cm>:<0cm,1cm>::} 
		!{(0,0) }*+{\bullet_{0}}="a" 
		!{(0.5,1) }*+{\bullet_{\infty}}="b" 
		!{(1,0) }*+{\bullet_{1}}="c" 
		!{(0.5,-1) }*+{\bullet_{2/5}}="d"
		!{(-1,0) }*+{\bullet_{1/3}}="e"
		!{(2,0) }*+{\bullet_{2/11}}="f"
		!{(1.5,-1) }*+{\bullet_{2/9}}="g"		
		!{(-1,-3) }*+{\bullet_{2}}="j"
		!{(0,-3) }*+{\bullet_{2/3}}="k"		
		!{(1,-3) }*+{\bullet_{1/5}}="l"
		!{(2,-3	) }*+{\bullet_{1/4}}="m"
		"a":@/^/"b"
    		"b":@/^/"c"
      		"c":@/^/"d"
       		"d":@/^/"a"     		
    		"e":"a"
      		"f":"c"
      		"g":"d"
			"j":"k"
			"k":@/^/"l"
      		"l":@/^/"k"   		
			"m": "l"		    		   		
	} 
	\\ 
 \hline

R4P5 & $\displaystyle\frac{35z^2-31z+6}{35z^2}$	 	&  \xygraph{ 
		!{<0cm,0cm>;<2cm,0cm>:<0cm,1cm>::} 
		!{(0,0) }*+{\bullet_{0}}="a" 
		!{(0.5,1) }*+{\bullet_{\infty}}="b" 
		!{(1,0) }*+{\bullet_{1}}="c" 
		!{(0.5,-1) }*+{\bullet_{2/7}}="d"
		!{(-1,0) }*+{\bullet_{3/5}}="e"
		!{(2,0) }*+{\bullet_{6/31}}="f"
		!{(1.5,-1) }*+{\bullet_{6/25}}="g"		
		!{(-2,1) }*+{\bullet_{2}}="h"
		!{(-2,-1) }*+{\bullet_{3/14}}="i"	
		!{(-1,-3) }*+{\bullet_{6}}="j"
		!{(0,-3) }*+{\bullet_{6/7}}="k"		
		!{(1,-3) }*+{\bullet_{1/5}}="l"
		!{(2,-3	) }*+{\bullet_{1/4}}="m"
		"a":@/^/"b"
    		"b":@/^/"c"
      		"c":@/^/"d"
       		"d":@/^/"a"     		
    		"e":"a"
      		"f":"c"
      		"g":"d"
      		"h":"e"
      		"i":"e"		
			"j":"k"
			"k":@/^/"l"
      		"l":@/^/"k"   		
			"m": "l"		    		   		
	} 
	\\ 
 \hline
\end{tabular}
\end{center}
\end{table}

\begin{table}[h]
\begin{center}
\caption{Inadmissible quadratic maps with a $\mathbb{Q}$-rational periodic critical point of period 2}\label{table:n2e}
\begin{tabular}{|c|c|c|}\hline

ID						& Preperiodicity graph  & genus\\ 
\hline \hline 

N2E1			& \xygraph{ 
		!{<0cm,0cm>;<1cm,0cm>:<0cm,1cm>::} 	
		!{(0,0) }*+{\bullet_{}}="a_1"
		!{(2,0) }*+{\bullet_{}}="b_1"
		!{(4,0) }*+{\bullet_{}}="c1"
		!{(-1,1) }*+{\bullet_{}}="a_2"
		!{(-1,-1) }*+{\bullet_{}}="b_2"		
		!{(5,0) }*+{\bullet_{}}="a3"
		!{(6,0) }*+{\bullet_{}}="b3"		
		"a_1":"b_1"
		"b_1":@/_/"c1"
		"c1":@/_/"b_1"
		"a_2":"a_1"
		"b_2":"a_1"
		"a3":"b3"
		"b3":@(r,lu) "b3"
	} & 1\\ 
\hline


N2E2			& \xygraph{ 
		!{<0cm,0cm>;<1cm,0cm>:<0cm,1cm>::} 
		!{(-2,-1) }*+{\bullet_{}}="a" 
		!{(-2,1) }*+{\bullet_{}}="b"
		!{(0,0) }*+{\bullet_{}}="c"
		!{(1,-1) }*+{\bullet_{}}="d"		
		!{(2,0) }*+{\bullet_{}}="e"
		!{(4,0) }*+{\bullet_{0}}="f"
		!{(6,0) }*+{\bullet_{}}="g"		
		!{(-3,1) }*+{\bullet_{}}="h"
		!{(-3,0) }*+{\bullet_{}}="i"
		"a":"c"
		"b":"c"
		"c":"e"
		"d":"e"
		"e":"f"
		"f":@/_/"g"
		"g":@/_/"f"
		"h":"b"
		"i":"b"
	} & 1\\ 
\hline

N2E3			& \xygraph{ 
		!{<0cm,0cm>;<1cm,0cm>:<0cm,1cm>::} 	
		!{(-3,0) }*+{\bullet_{}}="a_1"
		!{(-2,0) }*+{\bullet_{}}="b_1"
		!{(-1,0) }*+{\bullet_{}}="c1"
		!{(3,0) }*+{\bullet_{}}="a_2"
		!{(2,0) }*+{\bullet_{}}="b_2"		
		!{(1,1) }*+{\bullet_{}}="c2"
		!{(1,0) }*+{\bullet_{}}="d2"		
		!{(0,1) }*+{\bullet_{}}="e2"
		!{(0,0) }*+{\bullet_{}}="f2"				
		"a_1":"b_1"
		"b_1":@/_/"c1"
		"c1":@/_/"b_1"
		"a_2":@(r,lu) "a_2"
		"b_2":"a_2"
		"c2":"b_2"
		"d2":"b_2"
		"e2":"c2"
		"f2":"c2"		
	} & 1\\ 
\hline

N2E4			& \xygraph{ 
		!{<0cm,0cm>;<1cm,0cm>:<0cm,1cm>::} 
		!{(-2,0) }*+{\bullet_{}}="a" 
		!{(0,0) }*+{\bullet_{}}="b" 
		!{(2,0) }*+{\bullet_{}}="c" 
		!{(3,-1) }*+{\bullet_{}}="d"
		!{(4,1) }*+{\bullet_{}}="e" 		 
		!{(5,-1) }*+{\bullet_{}}="f"
		!{(6,1) }*+{\bullet_{}}="g" 		 		
		!{(2,-2) }*+{\bullet_{}}="j"
		!{(4,-2) }*+{\bullet_{}}="k"						
		"a":"b"
		"b":@/_/"c"
		"c":@/_/"b"
		"d":"e"
		"e":@(r,lu) "e"
		"f":"g"
		"g":@(r,lu) "g"		
		"j":"d"
		"k":"d"	
	} & 1 \\ 
\hline


N2E5			& \xygraph{ 
		!{<0cm,0cm>;<1cm,0cm>:<0cm,1cm>::} 
		!{(-2,0) }*+{\bullet_{}}="a" 
		!{(-2,1) }*+{\bullet_{}}="b"
		!{(0,0) }*+{\bullet_{}}="c"
		!{(0,-1) }*+{\bullet_{}}="d"		
		!{(2,0) }*+{\bullet_{}}="e"
		!{(4,0) }*+{\bullet_{}}="f"
		!{(6,0) }*+{\bullet_{}}="g"		
		!{(-2,-1) }*+{\bullet_{}}="h"
		!{(-2,-2) }*+{\bullet_{}}="i"
		"a":"c"
		"b":"c"
		"c":"e"
		"d":"e"
		"e":"f"
		"f":@/_/"g"
		"g":@/_/"f"
		"h":"d"
		"i":"d"
	} & 0\\ 
\hline

\end{tabular}
\end{center}
\end{table}

\newpage
\FloatBarrier


\def\cprime{$'$}
\providecommand{\bysame}{\leavevmode\hbox to3em{\hrulefill}\thinspace}
\providecommand{\MR}{\relax\ifhmode\unskip\space\fi MR }
\providecommand{\MRhref}[2]{%
  \href{http://www.ams.org/mathscinet-getitem?mr=#1}{#2}
}
\providecommand{\href}[2]{#2}

\end{document}